\numberwithin{equation}{section}
\theoremstyle{plain}
\newtheorem{thm}[equation]{Theorem}
\newtheorem{cor}[equation]{Corollary}
\newtheorem{lem}[equation]{Lemma}
\newtheorem{prop}[equation]{Proposition}
\newtheorem{definition}[equation]{Definition}
\newtheorem{exa}[equation]{Example}
\newtheorem{rem}[equation]{Remark}
\theoremstyle{definition}
\theoremstyle{remark}
\providecommand{\abs}[1]{\lvert#1\rvert}
\DeclareMathOperator{\Gal}{Gal}
\def\Q{{\mathbb Q}}
\def\Z{{\mathbb Z}}
\def\G{{\mathbb G}}
\def\A{{\mathbb A}}
\def\Gal{{\rm Gal}}
\DeclareFontFamily{U}{wncy}{}
\DeclareFontShape{U}{wncy}{m}{n}{%
<5>wncyr5%
<6>wncyr6%
<7>wncyr7%
<8>wncyr8%
<9>wncyr9%
<10>wncyr10%
<11>wncyr10%
<12>wncyr6%
<14>wncyr7%
<17>wncyr8%
<20>wncyr10%
<25>wncyr10}{}
\DeclareMathAlphabet{\cyr}{U}{wncy}{m}{n}
\begin{document}

\title[Integral Points for Groups of Multiplicative Type]
{Integral Points for Groups of Multiplicative Type}

\author{Dasheng Wei$^1$}
\author{Fei Xu$^{1,2}$}

\address{$^1$ Academy of Mathematics and System Science,  CAS, Beijing
100190, P.R.China}

\email{dshwei@amss.ac.cn}

\address{$^2$ School of Mathematics, Capital Normal University,
Beijing 100048, P.R.China}

\email{xufei@math.ac.cn}

\date{\today}

\maketitle

To Professor Kezheng Li on his 65-th birthday

 \bigskip

\section*{\it Abstract}

We construct a finite subgroup of Brauer-Manin obstruction for
detecting the existence of integral points on integral models of
homogeneous spaces of linear algebraic groups of multiplicative
type. As application, the strong approximation theorem for linear
algebraic groups of multiplicative type is established. Moreover,
the sum of two integral squares over some quadratic fields is
discussed.

\bigskip

{\it MSC classification} : 11E72, 11G35, 11R37, 14F22, 14G25, 20G30,
20G35


\bigskip

{\it Keywords} :  integral point, linear algebraic group of
multiplicative type, Galois cohomology, Brauer-Manin obstruction,
strong approximation, sum of two squares.

\section*{Introduction} \label{sec.notation}

The integral points on homogeneous spaces of semi-simple and simply
connected linear algebraic groups of non-compact type were studied
by Borovoi and Rudnick in \cite{BR95} and homogeneous spaces of
connected semi-simple linear algebraic groups of non-compact type by
Colliot-Th\'el\`ene and the second named author in \cite{CTX} using
the strong approximation theorem and the Brauer-Manin obstruction.
Recently, Harari \cite{Ha08} showed that the Brauer-Manin
obstruction accounts for the nonexistence of integral points.
Colliot-Th\'el\`ene noticed that a finite subgroup of the Brauer
group is enough to account for the nonexistence of integral points
by the compactness arguments. These results are nonconstructive:
they do not say which finite subgroup to use. In our previous paper
\cite{WX}, we construct such finite groups for multi-norm tori
explicitly. In this paper, we'll extend this construction for linear
algebraic groups of multiplicative type.

The paper is organized as follows. In Section 1, we'll give an
explicit construction of the so called admissible groups (see
Definition \ref{adm}) and establish the criterion for existence of
the integral points in terms of admissible groups. In Section 2, we
translate such admissible groups into the finite Brauer-Manin
obstruction. As an application, we establish the strong
approximation theorem for the group of multiplicative type in
Section 3. In Section 4, we discussed the sum of two integral
squares over some quadratic fields.

Notation and terminology are standard if not explained. Let $F$ be a
number field, $o_F$ be the ring of integers of $F$, $\Omega_F$
be the set of all primes in $F$ and $\infty_F$ be the set all
infinite primes in $F$. For simplicity, we write $p <\infty_F$
for $p \in \Omega_F\setminus \infty_{F}$. For any finite set
$S_0 \subset \Omega_F\setminus \infty_F$, the ring of $S_0$-integers
of $F$ is defined as
$$o_{S_0}=\{ x\in F: \ \abs {x}_p  \leq 1 \ \text{for all} \
p <\infty_F \ \text{and} \ p \not\in S_0 \}. $$ Let
$F_p $ be the completion of $F$ at $p $ and $o_{F_p }$ be the local completion of $o_F$ at $p $
for each $p \in \Omega_F$. Write $o_{F_p }={F_{p }}$ for $p \in \infty_F$. We also denote the adele (resp. the
idele) of $F$ by $\Bbb A_F$ (resp. $\Bbb I_F$) and
$$F_{\infty}=\prod_{p \in \infty_F}F_p  .$$

A group $G$ of multiplicative type over $F$ is defined as a closed
subgroup of a torus over $F$. Let $\hat{G}=Hom_{\bar F}(G, \Bbb
G_m)$ be the character of $G$. Then the functor $G\mapsto \hat{G}$
is an anti-equivalence between the category of $F$-groups of
multiplicative type with the category of finitely generated abelian
groups with the continuous action of $Gal(\bar{F}/F)$ (see
\cite{Sko}). Moreover, $G$ is a torus if and only if $\hat{G}$ is
free. For any positive integer $k$, the finite group scheme $G[k]$
over $F$ stands for the kernel of the multiplication by $k$ over
$G$. We also use $\bar{F}$ to denote the algebraic closure of $F$.

Let $\bf X$ be a separated $o_F$-scheme of finite type whose
generic fiber $X_F$ is a trivial torsor of $G$. The obvious
necessary condition for ${\bf X}(o_F)\neq \emptyset$ is
\begin{equation}\label{loc} \prod_{p \in \Omega_F} {\bf
X}(o_{F_p })\neq \emptyset \end{equation} which is
assumed throughout this paper. The Brauer group $Br(X_F)$ of $X_F$
is defined as $$Br(X_F)=H^2_{et}(X_F, \Bbb G_m) \ \ \ \text{and} \ \
\ Br_1(X_F)=ker[Br(X_F)\rightarrow Br(\bar{X})]$$ where
$\bar{X}=X_F\times_F \bar{F}$. Since the image of $Br(F)$ induced by
the structure morphism lies in $Br_1(X_F)$, one defines
$$Br_a(X_F)=coker[Br(F)\rightarrow Br_1(X)]. $$
For any subgroup $\frak s$ of $Br_a (X_F)$, one can define the
integral Brauer-Manin set with respect to $\frak s$ as (see
\cite{CTX})
$$(\prod_{p \in \Omega_F}{\bf X}(o_{F_p }))^{\frak
s}=\{ (x_p )\in \prod_{p \in \Omega_F}{\bf X}(o_{F_p }): \ \ \sum_{p \in \Omega_F} inv_p (s(x_p ))=0, \ \ \ \forall s\in \frak s\}.$$

\bigskip

\section{Construction of {\bf X}-admissible groups}   \label{sec.construction}

In this section, we'll extend the construction of ${\bf
X}$-admissible groups in \cite{WX} for multi-norm tori to the
general groups of multiplicative type. By using the anti-equivalence
between the category of $F$-tori with the category of free $\Bbb
Z$-modules of finite rank with the continuous action of
$Gal(\bar{F}/F)$, one has the following lemma which should be standard.

\begin{lem} \label{inverse} Let $T_1$ and $T_2$ be two tori over $F$ and
$\phi: T_1\rightarrow T_2$ be a surjective morphism of tori. Then
there is a morphism of tori $$\psi: T_2 \rightarrow T_1 \ \ \
\text{such that} \ \ \ \phi\circ \psi =[l]$$ for some positive
integer $l$.
\end{lem}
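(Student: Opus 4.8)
The statement is about tori, so the plan is to translate everything through the anti-equivalence of categories between $F$-tori and free $\mathbb{Z}$-modules of finite rank equipped with a continuous $\Gal(\bar F/F)$-action: a surjective morphism $\phi\colon T_1\to T_2$ of tori corresponds to an \emph{injective} morphism of Galois lattices $\hat\phi\colon \hat{T_2}\hookrightarrow \hat{T_1}$, and what I must produce is a Galois-equivariant map $\hat\psi\colon \hat{T_1}\to \hat{T_2}$ with $\hat\psi\circ\hat\phi = [l]$ for some positive integer $l$ (note the reversal of the composition order, since the functor is contravariant). So the whole lemma reduces to a purely lattice-theoretic claim: given an injection $\iota\colon M\hookrightarrow N$ of $G$-lattices (here $G=\Gal(\bar F/F)$, acting through a finite quotient since the action is continuous and the lattices are finitely generated), there is a $G$-equivariant retraction up to a multiple, i.e.\ a $G$-map $r\colon N\to M$ with $r\circ\iota=[l]$.

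First I would reduce to the case where $M$ is a direct summand of $N$ as an abelian group: after tensoring with $\mathbb{Q}$, the inclusion $M_{\mathbb{Q}}\subset N_{\mathbb{Q}}$ of $\mathbb{Q}[G]$-modules splits (since $G$ is finite, $\mathbb{Q}[G]$ is semisimple by Maschke's theorem), giving a $\mathbb{Q}[G]$-linear projection $p\colon N_{\mathbb{Q}}\to M_{\mathbb{Q}}$ restricting to the identity on $M_{\mathbb{Q}}$. Clearing denominators: choose a positive integer $l$ such that $l\cdot p$ carries the lattice $N$ into the lattice $M$; this is possible because $N$ is finitely generated, so finitely many denominators appear among the images of a $\mathbb{Z}$-basis of $N$. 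Set $r = l\cdot p|_N\colon N\to M$. Then $r$ is $G$-equivariant (being a $\mathbb{Z}$-multiple of a $\mathbb{Q}[G]$-map restricted to lattices) and for $m\in M$ we have $r(\iota(m)) = l\cdot p(m) = l\,m$, i.e.\ $r\circ\iota = [l]$ as required. Translating back through the anti-equivalence, the morphism $\psi\colon T_2\to T_1$ dual to $r$ satisfies $\phi\circ\psi=[l]$.

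The only genuine point requiring care — and the step I'd flag as the main obstacle, though it is mild — is the appeal to semisimplicity: one must be sure the Galois action factors through a \emph{finite} quotient so that Maschke's theorem applies. This is automatic here: a morphism of tori over $F$ is defined over a finite extension, and both character lattices are finitely generated $\mathbb{Z}$-modules with continuous $G$-action, hence the action factors through $\Gal(L/F)$ for a suitable finite Galois extension $L/F$ containing all the relevant splitting fields. Everything else — the existence of the rational splitting, clearing denominators, and checking $G$-equivariance of the resulting integral map — is routine. I would also remark that $l$ can be taken to divide the order of $\Gal(L/F)$ times the index of $M$ in its saturation, but the lemma as stated only needs existence of \emph{some} $l$, so I would not pursue sharper bounds.
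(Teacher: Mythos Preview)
Your proposal is correct and follows essentially the same route as the paper: pass to character lattices via the anti-equivalence, tensor with $\mathbb{Q}$, split the resulting injection using Maschke's theorem, and clear denominators to obtain an integral Galois-equivariant retraction realizing $[l]$. The only cosmetic difference is that the paper phrases the semisimplicity step in terms of the compact group $\Gal(\bar F/F)$ rather than explicitly descending to a finite quotient, but the content is identical.
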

\begin{proof} Since the category of $F$-tori is anti-equivalent to
the category of free $\Bbb Z$-modules of finite rank with the
continuous action of $Gal(\bar{F}/F)$, one has the injective
$Gal(\bar{F}/F)$-module homomorphism $\hat{\phi}:
\hat{T}_2\rightarrow \hat{T}_1$. Extending this map over $\Bbb Q$,
one obtains the injective $Gal(\bar{F}/F)$-module homomorphism
$$\hat{\phi}_\Bbb Q: \hat{T}_2\otimes_\Bbb Z \Bbb Q \longrightarrow
\hat{T}_1\otimes_\Bbb Z \Bbb Q .$$ Since the representation of
compact group over a field of characteristic 0 is semi-simple
(Maschke's Theorem), there is a $Gal(\bar{F}/F)$-module homomorphism
$$\hat{\psi'}_\Bbb Q: \hat{T}_1\otimes_\Bbb Z \Bbb Q \longrightarrow
\hat{T}_2\otimes_\Bbb Z \Bbb Q$$ such that $\hat{\psi'}_\Bbb Q \circ
\hat{\phi}_\Bbb Q =1$. Since $\hat{T}_1$ is of finite rank, there is
a positive integer $l$ such that $l\hat{\psi'}_\Bbb
Q(\hat{T}_1)\subseteq \hat{T}_2$. Then the homomorphism from $T_2$
to $T_1$ associated to $l\hat{\psi'}_\Bbb Q$ is as required.
\end{proof}

Fix a finite Galois extension $E/F$ such that $Gal(\bar{F}/E)$ acts
on $\hat{G}$ trivially. Let $M$ be a free $\Bbb Z$-module of finite
rank such that $f:  M\rightarrow \hat{G}$ is a surjective
homomorphism. One can extend this map to the $Gal(\bar{F}/F)$-module
surjective homomorphism $$ \Bbb Z[Gal(E/F)]\otimes_\Bbb Z
M\rightarrow \hat{G}, \ \ \ \sigma\otimes m \mapsto \sigma(f(m))
$$ where $Gal(\bar{F}/F)$ acts on $\Bbb Z[Gal(E/F)]\otimes_\Bbb Z
M$ as follows $$\tau \circ (\sigma \otimes m)=\bar{\tau}\sigma
\otimes m $$ for $\tau\in Gal(\bar{F}/F)$ and $\sigma\otimes m\in
\Bbb Z[Gal(E/F)]\otimes_\Bbb Z M$. By the anti-equivalence between
the category of multiplicative groups over $F$ with the category of
finitely generated $\Bbb Z$-modules with the continuous action of
$Gal(\bar{F}/F)$, there is an $F$-torus $T$ such that
\begin{equation}\label{resolution} 0 \rightarrow G\xrightarrow{\lambda}
\prod_{1}^n R_{E/F}(\Bbb G_m) \xrightarrow{\nu} T \rightarrow 0
\end{equation} where $n=rank_\Bbb Z (M)$.

Applying Lemma \ref{inverse} to (\ref{resolution}), one can fix a
homomorphism $$\mu: T \rightarrow \prod_{1}^n R_{E/F}(\Bbb G_m)$$
such that $\nu\circ \mu =[l]$ for some positive integer $l$.

\medskip

Since all morphisms are of finite type, we fix a finite subset $S_0\subset \Omega_F\setminus \infty_F$ such that

1) the groups $G$, $\prod_{1}^n R_{E/F}(\Bbb G_m)$ and $T$ over $F$
extend to the group schemes $\bold G$, $\bold R_{E/F}(\Bbb G_m)$ and
$\bold T$ over $o_{S_0}$ such that (\ref{resolution}) extends
to
$$0 \longrightarrow \bold G\longrightarrow \prod_{1}^n \bold R_{E/F}(\Bbb G_m) \longrightarrow
\bold T \longrightarrow 0 $$ over $o_{S_0}$ and $$\bold
R_{E/F}(\Bbb G_m)(o_{F_p })=o_{E_p }^\times
$$ for $p \not\in S_0$.

2) the trivial torsor $X_F$ of $G$ over $F$ extend to the trivial
torsor $\bold X_{S_0}=\bold X\times_{o_F} o_{S_0}$ of
$\bold G$ over $o_{S_0}$.

3) the homomorphism $\mu$ also extends to ${\bold T} \rightarrow
\prod_{1}^n \bold R_{E/F}(\Bbb G_m)$ over $o_{S_0}$.

4) any $l$-torsion point of $T(F_p )$ is contained in $\bold
T(o_{F_p })$ for all $p \not \in S_0$.

\medskip

For the above torus $T$,  the class group of $T$ with respect to
$S_0$ can be defined as follows
$$cls(T,S_0)=T(\Bbb A_F)/[T(F)+
(\prod_{p  \in S_0} T(F_p ) \times \prod_{p \not\in S_0} \bold T(o_{F_p }))]$$
and this group $cls(T,S_0)$ is finite by Theorem 5.1 in \cite{PR94}.
Denote $h(T,S_0)=\sharp cls(T,S_0)$.

 Let $A$ be a finite group scheme over $F$. One defines
$$ \cyr{X}_F^1(A)=ker[H^1(F,A)\longrightarrow \prod_{p <\infty_F} H^1(F_p , A)].$$

\begin{lem} \label{sha} For any torus $T$ over $F$, there is a positive integer $c=c(F,T)$ such that
$\cyr{X}_F^1(T[k])$ is killed by $c$ for all positive integer $k$.
\end{lem}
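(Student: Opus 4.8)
The plan is to reduce the statement about $\cyr{X}_F^1(T[k])$ for all $k$ to a single finiteness fact, by exploiting the fact that $T[k]$ sits inside $T$ and that $\cyr{X}$ of a torus is finite. First I would fix the torus $T$ once and for all and apply Lemma \ref{inverse} (or rather, use a coflasque/flasque resolution, but Lemma \ref{inverse} is the tool the paper has supplied): choose an exact sequence $1\to T\to Q\to T'\to 1$ of $F$-tori with $Q$ quasi-trivial (a product of $R_{E/F}(\mathbb G_m)$'s), for a suitable finite Galois $E/F$ splitting $T$. For a quasi-trivial torus $Q$ one has $H^1(F,Q)=0$ and $H^1(F_{\mathfrak p},Q)=0$ for all $\mathfrak p$ by Shapiro plus Hilbert~90, so $\cyr{X}_F^1(Q)=0$; moreover $\cyr{X}^2_F(Q)$ is controlled (it is $\cyr{X}^2$ of a permutation module, which by Poitou--Tate duality is dual to $\cyr{X}^1$ of a permutation module, hence vanishes). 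The point of introducing $Q$ is that $Q[k]$ is then ``better behaved'' than $T[k]$.

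Next I would take the long exact cohomology sequences attached to $1\to T[k]\to Q[k]\to T'[k]\to 1$ and to the Kummer sequences $1\to T[k]\to T\xrightarrow{[k]} T\to 1$ etc., and chase $\cyr{X}$-terms. Concretely, from $1\to T[k]\to T\xrightarrow{[k]}T\to 1$ one gets an exact sequence $T(F)\xrightarrow{[k]}T(F)\to H^1(F,T[k])\to H^1(F,T)$, and similarly over each $F_{\mathfrak p}$; diagram-chasing gives a surjection from $\cyr{X}^1_F(T)[k]$ onto a subgroup of $\cyr{X}^1_F(T[k])$, but the kernel is a quotient of something like $\coker[T(F)/k \to \prod_{\mathfrak p} T(F_{\mathfrak p})/k]$, which need not be killed by a fixed $c$ uniformly in $k$ unless one is careful. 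So instead I would prefer the route through $Q$: since $\cyr{X}^1_F(Q)=0$ and $\cyr{X}^2_F(Q)=0$, the sequence $Q(F)/k\to T'(F)/k\to H^1(F,T[k])$ together with the local analogues shows that $\cyr{X}^1_F(T[k])$ injects into $\cyr{X}^1_F$ of $T'[k]$-type data coming from $\coker(Q\to T')$, and ultimately into a group of the form $\cyr{X}^2_F(\text{finite module})$ or $\cyr{X}^1_F(T')$; one then argues by Poitou--Tate that the relevant $\cyr{X}$ is finite and, crucially, annihilated by the order of the ``defect'' of the resolution, i.e. by the integer $l$ from Lemma \ref{inverse} (using $\nu\circ\mu=[l]$) together with the finite group $\cyr{X}^1_F(T)$ itself. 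The constant $c$ is then, roughly, $l\cdot\sharp\cyr{X}^1_F(T)$ (or a small multiple thereof), which depends only on $F$ and $T$, not on $k$.

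More precisely, the cleanest incarnation: the map $\mu:T'\to Q$ with $\nu\circ\mu=[l]$ (in the notation of \eqref{resolution}, playing the roles $G\rightsquigarrow T$, $\prod R_{E/F}(\mathbb G_m)\rightsquigarrow Q$, $T\rightsquigarrow T'$) restricts on $k$-torsion to $\mu:T'[k]\to Q[k]$ with composite $[l]$ on $T'[k]$. Hence multiplication by $l$ on $\cyr{X}^i$ of the relevant modules factors through $\cyr{X}^i$ of $Q[k]$-related groups, which vanish or are controlled. Chasing this through the snake lemma for the $k$-torsion sequences shows $l\cdot\cyr{X}^1_F(T[k])$ lands in the image of $\cyr{X}^1_F(Q[k])\to\cyr{X}^1_F(T[k])$, and the latter is itself killed by a bounded constant because $H^1$ and $H^2$ of quasi-trivial tori vanish globally and locally, so $\cyr{X}^1_F(Q[k])$ embeds into $\cyr{X}^2_F(\text{a permutation module})=0$. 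Thus $l\cdot\cyr{X}^1_F(T[k])=0$ and we may take $c=l$.

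The main obstacle I anticipate is the uniformity in $k$: each individual $\cyr{X}^1_F(T[k])$ is obviously finite, but a priori its exponent could grow with $k$. The whole content is to pin the exponent down by a $k$-independent bound, and the device that makes this work is precisely Lemma \ref{inverse}: the integer $l$ with $\nu\circ\mu=[l]$ depends only on the chosen resolution of $T$, hence only on $F$ and $T$, and it simultaneously annihilates the obstruction for \emph{every} $k$ because $\mu$ is compatible with passing to $k$-torsion. A secondary technical point is to make sure the vanishing $\cyr{X}^1_F(Q)=\cyr{X}^2_F(Q)=0$ for quasi-trivial $Q$ is applied correctly (using Shapiro's lemma, Hilbert~90, and Poitou--Tate/Tate--Nakayama duality for permutation modules), and to handle the nuisance that $\cyr{X}$ here is defined using only \emph{finite} places; since $T$ and all modules in sight are split over the fixed extension $E$, the contribution of the archimedean places is harmless and absorbed into $c$ if necessary.
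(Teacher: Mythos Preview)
Your approach via a quasi-trivial resolution $1\to T\to Q\to T'\to 1$ together with Lemma~\ref{inverse} is genuinely different from the paper's. The paper argues by inflation--restriction: over a splitting field $E$ one has $T_E[k]\cong\prod_1^n\mu_k$, and the sequence
\[
H^1(E/F,T(E)[k])\longrightarrow H^1(F,T[k])\longrightarrow H^1(E,T_E[k])
\]
traps $\cyr{X}^1_F(T[k])$ between a group killed by $\gcd(\mu(E),[E:F])$ (since $T(E)[k]\subset\mu(E)^n$ and the Galois group has order $[E:F]$) and $\cyr{X}^1_E(\prod\mu_k)$, which is killed by $2$ by Grunwald--Wang (Theorem (9.1.3) in \cite{NSW}). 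This yields $c=2\gcd(\mu(E),[E:F])$ with no resolutions and no appeal to Lemma~\ref{inverse}.

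Your argument has a genuine gap. You assert that $\cyr{X}^1_F(Q[k])$ embeds into $\cyr{X}^2_F$ of a permutation module and hence vanishes, concluding $c=l$. This is false: for $Q=\prod R_{E_i/F}(\Bbb G_m)$ one has $Q[k]=\prod R_{E_i/F}(\mu_k)$, and Shapiro gives $\cyr{X}^1_F(Q[k])=\prod\cyr{X}^1_{E_i}(\mu_k)$, which by Grunwald--Wang can be nonzero of exponent $2$ in the special case. Dually, the Cartier dual $\prod R_{E_i/F}(\Bbb Z/k)$ is a permutation module over $\Bbb Z/k$, not over $\Bbb Z$, and its $\cyr{X}^2$ need not vanish. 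There is also an imprecision: the natural map from the exact sequence goes $\cyr{X}^1_F(T[k])\to\cyr{X}^1_F(Q[k])$, not the other way; to get a map in your direction you must explicitly build the retraction $\rho':Q\to T$ from $\mu\circ\nu-[l]_Q$ (which lands in $\ker\nu=T$ and satisfies $\rho'\circ\lambda=-[l]_T$). With that retraction in hand and with ``$=0$'' replaced by ``killed by $2$'', your argument does go through and yields $c=2l$. So your route is salvageable, but it is longer, less explicit, and in the end still rests on the same Grunwald--Wang input that the paper invokes directly.
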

\begin{proof} Let $E$ be a finite Galois extension over $F$ such that $$T_E:=T\times_F E\cong \prod_{1}^n \Bbb
G_{m,E}$$ for some positive integer $n$. Therefore
$T_E[k]=\prod_{1}^n \mu_k$. Using the inflation-restriction exact sequence of group cohomology,
together with its functoriality properties, one obtains the following commutative diagram
$$ \begin{array}{ccccccc}
   H^1(E/F, T(E)[k]) & \longrightarrow  & H^1(F, T[k])  & \longrightarrow  & H^1(E, T_E[k])  \\
 & & \downarrow & & \downarrow  \\
 & & \prod_{p <\infty_F} H^1(F_p , T[k]) & \longrightarrow  & \prod_{P< \infty_E} H^1(E_P,T_E[k]).
 \end{array} $$
By (9.1.3) Theorem in \cite{NSW}, one knows that
$\cyr{X}_E^1(T_E[k])$ is killed by 2.  If $\mu(E)$ is the order of
the set of all roots of unity contained in $E$, then $H^1(E/F,
 T(E)[k])$ is killed by the maximal common divisor $\mu(E)$ and $[E:F]$. One can take
 $c=2(\mu(E),[E:F])$.
\end{proof}

From the above proof, one can further obtain a uniform bound of
$\cyr{X}_F^1(T[k])$ for all positive integer $k$. The above result
is good enough for our application.

\begin{lem} \label{split} If $T\cong \prod_{1}^d \Bbb G_m$ for some positive
integer $d$, then $$T(F)\cap [\prod_{p \in \infty_F}T(F_p )\times \prod_{p \not\in \infty_F} [2k]T(F_p )] \subseteq
[k] T(F) $$ for any positive integer $k$.
\end{lem}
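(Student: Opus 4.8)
The plan is to reduce immediately to the case $d=1$. Indeed, for $T\cong\prod_1^d\Bbb G_m$ one has $T(F)=(F^\times)^d$, $T(F_\frak p)=(F_\frak p^\times)^d$, the endomorphism $[m]$ is the $m$-th power map in each coordinate, and all three subsets in the statement are direct products over the $d$ coordinates; so it is enough to prove that an $a\in F^\times$ lying in $(F_\frak p^\times)^{2k}$ for every $\frak p<\infty_F$ already lies in $(F^\times)^k$. Using the Kummer sequence, $H^1(F,\Bbb G_m[2k])=F^\times/(F^\times)^{2k}$ and likewise over each $F_\frak p$, so the problem amounts to showing that $\cyr{X}_F^1(\Bbb G_m[2k])$ maps to $0$ under the projection $F^\times/(F^\times)^{2k}\to F^\times/(F^\times)^k$, i.e.\ that $\cyr{X}_F^1(\Bbb G_m[2k])\subseteq (F^\times)^k/(F^\times)^{2k}$.

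The next step is to break $2k=\prod_\ell\ell^{a_\ell}$ into prime powers. The Chinese Remainder Theorem gives compatible isomorphisms $F^\times/(F^\times)^{2k}\cong\prod_\ell F^\times/(F^\times)^{\ell^{a_\ell}}$ and $F_\frak p^\times/(F_\frak p^\times)^{2k}\cong\prod_\ell F_\frak p^\times/(F_\frak p^\times)^{\ell^{a_\ell}}$, hence $\cyr{X}_F^1(\Bbb G_m[2k])\cong\prod_\ell\cyr{X}_F^1(\Bbb G_m[\ell^{a_\ell}])$; moreover $(F_\frak p^\times)^{2k}\subseteq(F_\frak p^\times)^{\ell^{a_\ell}}$, so the hypothesis on $a$ really does give, for each $\ell$, that $a$ is an $\ell^{a_\ell}$-th power at every finite place. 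For odd $\ell$ this makes $a$ an $\ell^{a_\ell}$-th power at all but finitely many places, and since $8\nmid\ell^{a_\ell}$ the rigidity form of the Grunwald--Wang theorem (see \cite{NSW}, Ch.\ IX) is not in its special case, so $a\in(F^\times)^{\ell^{a_\ell}}$ and $\cyr{X}_F^1(\Bbb G_m[\ell^{a_\ell}])=0$. Writing $2k=2^{b+1}m$ with $m$ odd (so $k=2^bm$), the only remaining case is $\ell=2$, where the claim becomes: if $a$ is a $2^{b+1}$-th power at every finite place, then $a\in(F^\times)^{2^b}$. Granting this, one reassembles the prime-power pieces: the odd ones contribute $0$, the $2$-part lies in $(F^\times)^{2^b}/(F^\times)^{2^{b+1}}$, and through the isomorphism above this gives $\cyr{X}_F^1(\Bbb G_m[2k])\subseteq(F^\times)^{2^bm}/(F^\times)^{2k}=(F^\times)^k/(F^\times)^{2k}$, which is the assertion.

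The routine parts are the coordinatewise reduction, the prime-power book-keeping, and the elementary inclusions of power subgroups. The one genuine obstacle is the Grunwald--Wang special case at the prime $2$ (which can only arise when $b\geq 2$): one must check that the special contribution, a group of order at most $2$ inside $\cyr{X}_F^1(\Bbb G_m[2^{b+1}])$, is contained in $(F^\times)^{2^b}/(F^\times)^{2^{b+1}}$, so that it is annihilated when the exponent is halved. This is precisely what the factor $2$ in ``$2k$'' is designed to absorb, and I would verify it either from the explicit form of the Grunwald--Wang special element (it is a square, indeed a $2^b$-th power, as in the classical example $16=2^4$ for the exponent $8$) or by combining the special-case statement with the elementary vanishing $\cyr{X}_F^1(\Bbb G_m[2])=0$, which forces $a$ to be positive at all real places and thereby rules out the bad coset. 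Accordingly, the write-up should be organised around a careful invocation of the rigidity form of the Grunwald--Wang theorem, applied one prime-power exponent at a time, with explicit attention to why passing from $2k$ to $k$ kills the $2$-primary special term.
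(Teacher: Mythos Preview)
Your reduction to $d=1$ and your identification of the problem with the inclusion $\cyr{X}_F^1(\mu_{2k})\subseteq (F^\times)^k/(F^\times)^{2k}$ are correct, and your prime-power decomposition is valid. However, the entire difficulty sits in the $2$-primary step, and there you only gesture at two possible arguments without carrying either one out. Your second suggestion in particular does not work as stated: knowing $\cyr{X}_F^1(\mu_2)=0$ tells you that $a$ is a global square, hence positive at every real embedding, but positivity at real places does not by itself force $a\in (F^\times)^{2^b}$, nor does it ``rule out the bad coset'' in any evident way. Your first suggestion, that the explicit Grunwald--Wang special element is already a $2^b$-th power, may well be extractable from the detailed description in \cite{NSW}, Chapter~IX, but you have not done so, and it is not as immediate as the $16=2^4$ example makes it look.

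The paper's proof avoids the Chinese Remainder detour entirely and is shorter. It applies Grunwald--Wang once, to $\mu_{2k}$, obtaining $x_i^2\in (F^\times)^{2k}$ and hence $x_i=\pm y_i^k$ for some $y_i\in F^\times$. The only issue is the sign. If $x_i=-y_i^k$, write $k=2^s k_1$ with $k_1$ odd. At each finite place $\frak p$ one has $x_i=w_\frak p^{2k}$, so $(w_\frak p^2/y_i)^k=-1$; raising to the $k_1$-th power shows that $F_\frak p$ contains a primitive $2^{s+1}$-th root of unity $\zeta_{2^{s+1}}$. Since this holds for every finite $\frak p$, Chebotarev forces $\zeta_{2^{s+1}}\in F$, and then $-1=(\zeta_{2^{s+1}})^k\in (F^\times)^k$, giving $x_i\in (F^\times)^k$ after all.

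Note that this Chebotarev argument is exactly what is needed to complete your $2$-primary step as well: applied with $k=2^b$ it shows directly that any $a$ which is a $2^{b+1}$-th power at all finite places lies in $(F^\times)^{2^b}$. So you can salvage your write-up by inserting this argument in place of the two unfinished suggestions; but then the CRT decomposition and the separate treatment of odd primes become superfluous, and you recover the paper's proof.
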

\begin{proof} Any element  $$x\in T(F)
\cap [ \prod_{p \in \infty_F}T(F_p ) \times \prod_{p \not \in \infty_F} [2k]T(F_p  ) ]$$ can be written as
$x=(x_1,\cdots,x_d)$ where $x_i\in F^\times$ and $x_i\in (F_p ^\times)^{2k}$ for all $p < \infty_F$ with $1\leq i\leq d$.
Applying (9.1.3) Theorem in \cite{NSW} for $\mu_{2k}$, one obtains
that $x_i^2\in (F^\times)^{2k}$ for $1\leq i\leq d$. There is
$y_i\in F^\times$ such that $x_i=y_i^k$ or $x_i=-y_i^k$ with $1\leq
i\leq d$.

Suppose $x_i=-y_i^k$ for some $1\leq i\leq d$. Let $k=2^s k_1$ with
$2\nmid k_1$ and $\zeta_{2^{s+1}}$ be a primitive $2^{s+1}$-th roots
of unity. Then $\zeta_{2^{s+1}}\in F_p ^\times$ for all $p <\infty_F$. This implies that all primes $p <\infty_F$ are split completely in the abelian extension $F(\zeta_{2^{s+1}})/F$. By the Chebotarev density theorem, one concludes that
$\zeta_{2^{s+1}}\in F$. Therefore $-1=(\zeta_{2^{s+1}})^k \in
(F^\times)^k$ and the proof is complete.
\end{proof}

For a general torus $T$, one can fix the splitting field $K/F$ of
$T$ such that $$T_K=T\times_F K\cong \prod_{1}^d \Bbb G_{m,K}$$ for
some positive integer $d$. Since $K/F$ is a finite extension, the
$K$-rational torsion points of $T_K$ is finite. Fix a positive
integer $t$ such that $t$ kills all $K$-rational torsion points of
$T_K$.

\begin{lem} \label{power} Let $T$ be a torus over $F$ as above and $k$ be a positive integer. Then there is
a finite subset $S_1$ of $\Omega_F\setminus \infty_F$ containing
$S_0$ such that
$$T(F)\cap [\prod_{p \in S}[c\cdot 2t \cdot h(T,S_0)\cdot k] T(F_p ) \times \prod_{p \not\in S}\bold T(o_{F_p })]
 \subseteq [k]T(F) $$ for any finite subset $S\supseteq S_1 $, where $c=c(F,T)$ is as in Lemma \ref{sha} and $t$ is as above.
\end{lem}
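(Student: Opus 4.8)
The plan is to split the argument into an $\cyr{X}_F^1$-reduction, handled by Lemma \ref{sha} (this is where the factor $c$ is spent), and a torsion correction, handled by reduction to the split torus over $K$ and Lemma \ref{split} (where the factor $2t$ is spent). Throughout write $m=c\cdot 2t\cdot h(T,S_0)\cdot k$ and abbreviate $h=h(T,S_0)$.

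First I would fix the finite set $S_1\supseteq S_0$ so that, for every finite $S\supseteq S_1$, the following hold: $\mathbf T$, the isogeny $[m]$ and the finite flat group scheme $\mathbf T[m]$ are defined over $\frak o_S$ and the natural map $H^1(\frak o_S,\mathbf T[m])\to H^1(F,T[m])$ is injective (so all primes dividing $m$, and the finitely many ``bad'' primes of $\mathbf T$, are put into $S_1$); the group $\cyr{X}_S^1(\mathbf T[m]):=\ker[H^1(\frak o_S,\mathbf T[m])\to\prod_{\frak p\in S}H^1(F_\frak p,T[m])]$ equals $\cyr{X}_F^1(T[m])$ --- this is legitimate because these groups are finite, decrease with $S$ (a class trivial at every prime of $S'\supseteq S$ is in particular unramified at $S'\setminus S$), and have intersection $\cyr{X}_F^1(T[m])$; and, for each of the finitely many non-zero $w\in T(F)$ with $[t]w=0$, some finite prime $\frak p_w$ with $w\notin[2thk]T(F_{\frak p_w})$ lies in $S_1$ (existence of $\frak p_w$ is checked at the end).

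Now take $x$ in the intersection set for such an $S$. Since $x$ is integral outside $S$, the class in $H^1(F,T[m])$ of the $T[m]$-torsor of $[m]$-th roots of $x$ comes from $H^1(\frak o_S,\mathbf T[m])$; since $x\in[m]T(F_\frak p)$ for every $\frak p\in S$ it dies locally at all $\frak p\in S$, hence lies in $\cyr{X}_S^1(\mathbf T[m])=\cyr{X}_F^1(T[m])$, which is killed by $c$ by Lemma \ref{sha}. By injectivity of $H^1(\frak o_S,\mathbf T[m])\to H^1(F,T[m])$ the class of the torsor of $[m]$-th roots of $[c]x$ is already trivial over $\frak o_S$, i.e.\ $[c]x=[m]\,y$ for some $y\in\mathbf T(\frak o_S)\subseteq T(F)$. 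Put $z:=x-[2thk]\,y$; then $[c]z=[c]x-[c]\bigl([2thk]y\bigr)=0$, so $z$ is torsion, and since $T(F)_{\mathrm{tors}}\hookrightarrow T(K)_{\mathrm{tors}}$ is killed by $t$ we get $[t]z=0$. As $x-z=[2thk]y\in[k]T(F)$, it remains to show $z\in[k]T(F)$; moreover, writing $x=[m]x_\frak p$ at $\frak p\in S$ gives $z=[2thk]\bigl([c]x_\frak p-y\bigr)\in[2thk]T(F_\frak p)$ for all $\frak p\in S$. If $z\neq 0$, then $z$ is one of the finitely many non-zero $t$-torsion points of $T(F)$, so $\frak p_z\in S_1\subseteq S$ satisfies $z\notin[2thk]T(F_{\frak p_z})$, contradicting $z\in[2thk]T(F_\frak p)$ for all $\frak p\in S$. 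Hence $z=0$ and $x=[2thk]y\in[k]T(F)$.

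It remains to see that each $\frak p_w$ exists. If a non-zero $t$-torsion $w\in T(F)$ had $w\in[2thk]T(F_\frak p)$ for \emph{every} finite prime $\frak p$, then $w_K\in[2thk]T(K_\frak p)$ for every finite prime of $K$, and since $T_K\cong\prod_1^d\Bbb G_{m,K}$, Lemma \ref{split} (with $thk$ in the role of ``$k$'') gives $w_K\in[thk]T(K)$; writing $w_K=[thk]u$ with $u\in T(K)$ and using $[t]w_K=0$ shows $u$ is torsion, hence $[t]u=0$, hence $w_K=[thk]u=[hk]([t]u)=0$, i.e.\ $w=0$, a contradiction. Thus every non-zero $t$-torsion point of $T(F)$ fails to be a $[2thk]$-th multiple at some finite prime. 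The hard part is exactly this torsion correction: one must arrange that the hypothesis ``$x$ is an $[m]$-th multiple at the primes of $S$'', known only for the prescribed finite set $S$, forces the residual torsion point $z$ to vanish. The factor $2$ inside $m$ packages the Grunwald--Wang correction already present in Lemma \ref{split}, the factor $t$ kills torsion over $K$, the factor $c$ is consumed in Lemma \ref{sha}, and $h(T,S_0)$ is what makes the choice of $S_1$ and the integral form of the $\cyr{X}_F^1$-step effective.
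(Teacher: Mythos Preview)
Your proof is correct, but it follows a genuinely different route from the paper's.

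The paper spends the factor $h(T,S_0)$ explicitly: given $\alpha$ in the intersection, it writes $\alpha=[m]\beta_\frak p$ at each $\frak p\in S$, chooses $\varpi_\frak p\in T(F)$ with $\varpi_\frak p+[h(T,S_0)]\beta_\frak p$ integral outside $S_0$ (this is exactly the class-number step), and forms $\gamma=\alpha+[c\cdot 2t\cdot k]\sum_{\frak p}\varpi_\frak p\in\bold T(\frak o_{S_0})$. Finite generation of $\bold T(\frak o_{S_0})$ (via the Dirichlet unit theorem) then lets the paper build $S_1$ by adjoining, for each coset of $\bold T(\frak o_{S_0})/[c\cdot 2t\cdot k]\bold T(\frak o_{S_0})$ that fails to lie in $[c\cdot 2t\cdot k]T(F_\frak p)$ for some finite $\frak p$, one such witness prime. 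This forces $\alpha\in[c\cdot 2t\cdot k]T(F_\frak p)$ at \emph{every} finite prime, after which Lemma~\ref{sha} gives $[c]x=[c\cdot 2t\cdot k]y$ with $y\in T(F)$, and Lemma~\ref{split} is applied directly to the residual $c$-torsion point $z=x-[2t\cdot k]y$ over $K$ to conclude $z=0$.

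You replace the class-number and unit-group step by the cohomological observation that the Kummer class of $x$ already lies in $H^1(\frak o_S,\bold T[m])$ and that the groups $\cyr X_S^1(\bold T[m])$ decrease to $\cyr X_F^1(T[m])$ as $S$ grows; and you handle the residual torsion not by applying Lemma~\ref{split} to $z$ itself but by inserting in advance a witness prime $\frak p_w$ into $S_1$ for each of the finitely many nonzero torsion points $w\in T(F)$, with the existence of each $\frak p_w$ deduced from Lemma~\ref{split}. This is slicker cohomologically and in fact never uses the factor $h(T,S_0)$ at all, so your closing remark that ``$h(T,S_0)$ is what makes the choice of $S_1$ and the integral form of the $\cyr X_F^1$-step effective'' is not accurate---your argument would go through unchanged with $m$ replaced by $c\cdot 2t\cdot k$. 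The paper's route, by contrast, is more elementary in that it avoids \'etale cohomology over $\frak o_S$ and the finiteness of $H^1(\frak o_S,\bold T[m])$, relying only on the unit theorem and the finiteness of $cls(T,S_0)$.
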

\begin{proof} Let $U_0$ be a finite subset of $\Omega_K\setminus \infty_K$ such
that the ring $o_{U_0}$ of $U_0$-integers of $K$ is the
integral closure of $o_{S_0}$ inside $K$. Let $U$ be a finite
subset of $\Omega_K\setminus \infty_K$ containing $U_0$ such that
$$\bold T_{U_0}(o_U)=\prod_{1}^n o_U^\times $$ where
$\bold T_{U_0}=\bold T\times_{o_{S_0}} o_{U_0}$. By the
Dirichlet unit theorem, one obtains that $\bold T_{U_0}(o_U)$
is a finitely generated abelian group. Since $\bold T(o_{U_0})=\bold T_{U_0}(o_{U_0})$ is a subgroup of $\bold
T_{U_0}(o_U)$ and $\bold T(o_{S_0})$ is a subgroup of
$\bold T(o_{U_0})$, one concludes that $\bold T(o_{S_0})$ is a finitely generated abelian group. Therefore $\bold
T(o_{S_0})/[c\cdot 2t \cdot k]\bold T(o_{S_0})$ is
finite.

If the coset $\alpha + [c\cdot 2t\cdot  k]\bold T(o_{S_0})$ of
$\bold T(o_{S_0})/[c\cdot 2t \cdot k]\bold T(o_{S_0})$
satisfies that $$\alpha \not\in [c\cdot 2t \cdot k] T(F_{p })$$
for some prime $p  <\infty_F$, we fix such a prime $p _\alpha$. Let $\frak S$ be the set consisting of all such $p _\alpha$ and $S_1=S_0\cup \frak S$.

For any finite set $S\supseteq S_1$ and
$$ \alpha \in T(F)\cap [\prod_{p \in S}[c \cdot 2t \cdot h(T,S_0)\cdot k] T(F_p )
\times \prod_{p \not\in S}\bold T(o_{F_p })] , $$
there is $\beta_p \in T(F_p )$ such that $\alpha=[c\cdot
2t \cdot h(T,S_0)\cdot k] \beta_p $ for each $p \in S$.
Since $\beta_p $ can be viewed as an element in $T(\Bbb A_F)$
whose $p $ component is $\beta_p $ and the others are 1,
there is $\varpi_p \in T(F)$ such that $$\varpi_{p } +
[h(T,S_0)] \beta_p  \in [\prod_{p  \in S_0} T(F_p )
\times \prod_{p \not\in S_0} \bold T(o_{F_p })]$$
for each finite $p \in S\setminus S_0$. This implies that
$$\gamma =\alpha + [c\cdot 2t \cdot k](\sum_{p \in S\setminus (S_0\cup \infty_F)} \varpi_p ) \in
\bold T (o_{S_0}) .$$ Therefore the coset of $\gamma + [c\cdot
2t \cdot k] \bold T(o_{S_0})$ of $\bold T(o_{S_0})/[c\cdot 2t \cdot k]\bold T(o_{S_0})$ is not one of
the above mentioned cosets. One concludes that
$$ \alpha \in  T(F)\cap [ \prod_{p \in \infty_F}T(F_p )\times \prod_{p \not \in \infty_F} [c\cdot 2t \cdot
k]T(F_p )] .$$ Hence $$ \aligned & T(F)\cap [\prod_{p \in
S}[c \cdot 2t \cdot h(T,S_0)\cdot k] T(F_p )
\times \prod_{p \not\in S}\bold T(o_{F_p })] \\
\subseteq \ \ \ & T(F)\cap [ \prod_{p \in \infty_F}T(F_p )\times \prod_{p \not \in \infty_F} [c\cdot 2t \cdot
k]T(F_p )].
\endaligned $$

For any element $$x \in T(F)\cap [ \prod_{p \in
\infty_F}T(F_p )\times \prod_{p \not \in \infty_F} [c\cdot
2t \cdot k]T(F_p )],$$ there is $y\in T(F)$ such that
$[c]x=[c\cdot 2t\cdot k]y$ by Lemma \ref{sha}. Let $z=x-[2t\cdot k]
y$. Then $z\in T(F)[c]$. By applying Lemma \ref{split} to
$T_K=T\times_F K$ over $K$, one obtains that there is $w\in T(K)$
such that $z=[t\cdot k]w$. Since $z$ is a torsion point of $T$, one
has $w$ is also a torsion point of $T_K$. Since $t$ kills all
$K$-torsion points of $T_K$, one concludes that $z=0$. The proof is
complete.
\end{proof}

\begin{lem}\label{open} If $C_p $ is an open subgroup of $G(F_p )$ and $k$ is a positive integer,
then $$\lambda (C_p ) \cdot \mu ([k]T(F_p ))$$ is an open
subgroup of $$\prod_{1}^n R_{E/F}(\Bbb G_m)(F_p )=\prod_{1}^n
E_p ^\times$$ for $p $-adic topology with $p <\infty_F$.
\end{lem}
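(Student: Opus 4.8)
The plan is to realise $\lambda(C_\frak p)\cdot\mu([k]T(F_\frak p))$ as the image of an open subgroup under an \'etale group homomorphism, and then to conclude by the $\frak p$-adic implicit function theorem. Write $P=\prod_1^n R_{E/F}(\Bbb G_m)$ for brevity, and introduce the homomorphism of algebraic groups over $F$
$$\phi\colon G\times_F T\longrightarrow P,\qquad (g,t)\longmapsto \lambda(g)\cdot\mu(t),$$
which is a group homomorphism because $P$ is commutative and $\lambda,\mu$ are homomorphisms.

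Next I would compute the differential of $\phi$ at the identity. Since $\car F=0$, all the groups occurring are smooth, so the exact sequence $(\ref{resolution})$ induces an exact sequence of Lie algebras $0\to\R{Lie}(G)\xrightarrow{d\lambda}\R{Lie}(P)\xrightarrow{d\nu}\R{Lie}(T)\to 0$; moreover $\nu\circ\mu=[l]$ gives $d\nu\circ d\mu=l\cdot\id$, which is an isomorphism. Hence $\R{Lie}(P)=\im(d\lambda)\oplus\im(d\mu)$, and $d\phi=d\lambda\oplus d\mu$ is an isomorphism $\R{Lie}(G)\oplus\R{Lie}(T)\xrightarrow{\ \sim\ }\R{Lie}(P)$; in particular $\dim(G\times_F T)=\dim P$. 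A homomorphism of algebraic groups in characteristic $0$ with bijective differential at the identity is \'etale onto an open subgroup scheme of the target, since its kernel is finite \'etale and the induced map on the quotient is a closed immersion of smooth groups of the same dimension, hence an open immersion; thus $\phi$ is \'etale onto an open subgroup scheme $P'\subseteq P$ (and in fact $P'=P$, as $P$ is connected).

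Finally I would pass to $\frak p$-adic points. An \'etale morphism of smooth $F_\frak p$-schemes induces a local analytic isomorphism, hence an open map, on $F_\frak p$-points; applied to $\phi$ this shows that $\phi\colon G(F_\frak p)\times T(F_\frak p)\to P(F_\frak p)$ is an open map, and applied to the isogeny $[k]\colon T\to T$ (finite \'etale since $\car F=0$) it shows that $[k]T(F_\frak p)$ is open in $T(F_\frak p)$. Consequently $C_\frak p\times[k]T(F_\frak p)$ is open in $G(F_\frak p)\times T(F_\frak p)$, so its image $\phi\bigl(C_\frak p\times[k]T(F_\frak p)\bigr)=\lambda(C_\frak p)\cdot\mu([k]T(F_\frak p))$ is open in $P(F_\frak p)=\prod_1^n E_\frak p^\times$; it is a subgroup because it is the image under the homomorphism $\phi$ of the subgroup $C_\frak p\times[k]T(F_\frak p)$. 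The only substantive step is the passage from ``\'etale morphism of schemes'' to ``open map on $\frak p$-adic points'' (the $\frak p$-adic implicit function theorem); concretely, all that is used is the splitting $\R{Lie}(P)=\im(d\lambda)\oplus\im(d\mu)$, which makes the $\frak p$-adic analytic subgroups $\lambda(G(F_\frak p))$ and $\mu(T(F_\frak p))$ transverse with complementary tangent spaces at the identity, so that the product of a small neighbourhood of $1$ in each already fills out a neighbourhood of $1$ in $P(F_\frak p)$.
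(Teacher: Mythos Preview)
Your proof is correct and takes a genuinely different route from the paper's.

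The paper gives a hands-on argument: it chooses integers $a<a_1<a_2$ so that (i) $\prod_1^n(1+\frak p^a\frak o_{E_\frak p})\cap\lambda(G(F_\frak p))\subseteq\lambda(C_\frak p)$, (ii) $\mu\circ\nu$ carries $\prod_1^n(1+\frak p^{a_1}\frak o_{E_\frak p})$ into $\prod_1^n(1+\frak p^a\frak o_{E_\frak p})$, and (iii) by Hensel's lemma every element of $1+\frak p^{a_2}\frak o_{E_\frak p}$ is an $(lk)$-th power in $1+\frak p^{a_1}\frak o_{E_\frak p}$. Writing $x=y^{lk}$ and decomposing $x=[y^{lk}\cdot(\mu\circ\nu(y^k))^{-1}]\cdot\mu\circ\nu(y^k)$, the first factor lies in $\ker\nu\cap\prod_1^n(1+\frak p^a\frak o_{E_\frak p})\subseteq\lambda(C_\frak p)$ and the second in $\mu([k]T(F_\frak p))$; hence $\prod_1^n(1+\frak p^{a_2}\frak o_{E_\frak p})$ is contained in $\lambda(C_\frak p)\cdot\mu([k]T(F_\frak p))$.

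Your argument replaces this explicit decomposition by the observation that $\phi=(\lambda,\mu)$ is \'etale, because $\nu\circ\mu=[l]$ forces $d\mu$ to split the Lie-algebra sequence, and then invokes the $\frak p$-adic implicit function theorem to get openness on points. This is cleaner and more conceptual; the paper's approach is more elementary and entirely self-contained (no appeal to the analytic theory), and in effect carries out the implicit function theorem by hand via Hensel's lemma. At bottom the two proofs exploit the same splitting: your direct-sum decomposition $\mathrm{Lie}(P)=\mathrm{im}(d\lambda)\oplus\mathrm{im}(d\mu)$ is the infinitesimal shadow of the paper's identity $y^{lk}=[y^{lk}(\mu\circ\nu(y^k))^{-1}]\cdot(\mu\circ\nu(y^k))$.
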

\begin{proof} Since $C_p $ is an open subgroup of $G(F_p )$ and $\lambda[G(F_p )]$ is a closed subgroup of $$\prod_1^n
R_{E/F}(\Bbb G_m)(F_p )=\prod_1^n E_p ^\times , $$ there
is a positive integer $a$ such that
$$[\prod_1^n(1+p ^a o_{E_{p }})]\cap \lambda[G(F_p )]
\subseteq \lambda(C_p ).
$$ Since $\mu\circ \nu$ is a continuous endomorphism of $$\prod_1^n
R_{E/F}(\Bbb G_m)(F_p )=\prod_1^n E_p ^\times, $$ there is
a positive integer $a_1>a$ such that
$$\mu
\circ \nu [\prod_{1}^n (1+p ^{a_1} o_{E_{p }})]\subseteq \prod_{1}^n(1+p ^a o_{E_{p }}).$$
 By Hensel's lemma,
there is a positive integer $a_2>a_1$ such that
$$(1+p ^{a_2} o_{E_{p }})\subseteq (1+p ^{a_1} o_{E_{p }})^{lk} .$$

For any $x \in \prod_{1}^n (1+p ^{a_2} o_{E_{p }})
$, there is $$y\in \prod_{1}^n (1+p ^{a_1} o_{E_{p }}) \ \ \ \text{such that} \ \ \ x=y^{lk} .$$ Then
$$x=y^{lk}=[y^{lk}(\mu\circ \nu (y^k))^{-1}](\mu\circ \nu (y^k)).
$$ Since
$$\nu [y^{lk}(\mu\circ \nu
(y^k))^{-1}]=[l]\nu(y^k)-\nu\circ\mu\circ\nu(y^k)=0 ,$$ one
concludes $$y^{lk}(\mu\circ \nu (y^k))^{-1} \in \lambda(C_p ).
$$ Therefore
$$\prod_{1}^n (1+p ^{a_2} o_{E_{p }}) \subseteq
\lambda (C_p ) \cdot \mu ([k]T(F_p ))$$ and the proof is
complete. \end{proof}

Since $\bf X$ is separated over $o_{F}$, one can view ${\bf
X}(o_{F_p })$ as an open subset of $X_F(F_p )$ by
the natural map for any $p  \in\Omega_F$.

\begin{definition}\label{stlocal} Define $$Stab({\bf X}(o_{F_p }))=\{ g\in
G(F_p ):  \ \ g{\bf X}(o_{F_p })={\bf X}(o_{F_p })\}$$ for $p  \in \Omega_F$.

\end{definition}

It is clear that $Stab({\bf X}(o_{F_p }))$ is an open
subgroup of $G(F_p )$. Since $X_F$ is a trivial torsor of $G$
over $F$, one has that $$Stab({\bf X}(o_{F_p }))={\bf
G}(o_{F_p })$$ for almost all $p $ in $\Omega_F$.

\begin{definition}\label{stadelic} Define $$Stab_{\Bbb A}({\bf X})=
\prod_{p \in \Omega_F}Stab({\bf X}(o_{F_p })). $$
\end{definition}

It is clear that $Stab_{\Bbb A}({\bf X})$ is an open subgroup of $
G(\Bbb A_F)$. The map given by (\ref{resolution})  $$\lambda:
G\longrightarrow \prod_{1}^n R_{E/F}(\Bbb G_m)$$ induces the
homomorphism
$$\lambda: \ \ G(\Bbb A_F) \longrightarrow \prod_{1}^n \Bbb I_E . $$

\begin{definition}\label{adm} An open subgroup $\Xi$ of $\prod_{1}^n \Bbb
I_{E}$ is called $\bf X$-admissible if
$$\lambda [Stab_\A({\bf X})]\subseteq \Xi$$ and the
induced map
$$ \lambda: \ \ \ G(\Bbb A_F)/G(F)Stab_\A({\bf X}) \longrightarrow
\prod_{1}^n \Bbb I_{E}/(\prod_{1}^n E^\times)\cdot \Xi $$ is
injective. \end{definition}

The main result of this section is to show the existence of the
admissible subgroups.

\begin{thm}\label{exsitence} If $\bf X$ is a separated scheme over $o_F$ of finite type
such that the generic fiber $X_F$ is a trivial torsor of a
multiplicative group $G$ satisfying (\ref{resolution}), then the
$\bf X$-admissible subgroups of $\prod_{1}^n \Bbb I_E$ always exist.
\end{thm}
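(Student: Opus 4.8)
The plan is to build $\Xi$ explicitly from the local stabilizers $Stab(\mathbf{X}(\mathfrak o_{F_\mathfrak p}))$ together with a carefully chosen positive integer $k$ applied to $T$. The basic idea: since $G(\mathbb A_F)/G(F)Stab_\A(\mathbf{X})$ is a finite group (it is a quotient of a suitable class group — one can bound it using the exact sequence \eqref{resolution} and finiteness of $cls(T,S_0)$ together with finiteness of the local index $\bigl[G(F_\mathfrak p):Stab(\mathbf{X}(\mathfrak o_{F_\mathfrak p}))\bigr]$, which is $1$ for almost all $\mathfrak p$), the map $\lambda$ has finite kernel and image on this quotient, and we only need $\Xi$ large enough to contain $\lambda[Stab_\A(\mathbf X)]$ yet small enough not to collapse the finitely many nontrivial classes. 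First I would set $C_\mathfrak p = Stab(\mathbf{X}(\mathfrak o_{F_\mathfrak p}))$ for each $\mathfrak p<\infty_F$ and $C_\mathfrak p = G(F_\mathfrak p)$ for $\mathfrak p\in\infty_F$, so that $Stab_\A(\mathbf X) = \prod_\mathfrak p C_\mathfrak p$.

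Next, for a suitable $k$ (to be pinned down below) I would define
$$\Xi = \prod_{\mathfrak p<\infty_F}\Bigl(\lambda(C_\mathfrak p)\cdot \mu([k]T(F_\mathfrak p))\Bigr)\ \times\ \prod_{\mathfrak p\in\infty_F}\Bigl(\lambda(G(F_\mathfrak p))\cdot \mu([k]T(F_\mathfrak p))\Bigr).$$
By Lemma \ref{open} each factor at a finite place is an open subgroup of $\prod_1^n E_\mathfrak p^\times$, and for almost all $\mathfrak p$ one has $\lambda(C_\mathfrak p)\cdot\mu([k]T(F_\mathfrak p)) \supseteq \prod_1^n\frak o_{E_\mathfrak p}^\times$ (using condition (1) on $S_0$, condition (4), and the fact that $C_\mathfrak p = \mathbf G(\frak o_{F_\mathfrak p})$ for almost all $\mathfrak p$); hence $\Xi$ is genuinely an open subgroup of $\prod_1^n \mathbb I_E$. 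By construction $\lambda[Stab_\A(\mathbf X)]\subseteq\Xi$, so the first requirement of Definition \ref{adm} is automatic. It remains to choose $k$ so that the induced map on $G(\mathbb A_F)/G(F)Stab_\A(\mathbf X)$ is injective.

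For injectivity, suppose $g = (g_\mathfrak p)\in G(\mathbb A_F)$ with $\lambda(g) \in (\prod_1^n E^\times)\cdot\Xi$; I must show $g\in G(F)\cdot Stab_\A(\mathbf X)$. Writing $\lambda(g) = e\cdot\xi$ with $e\in\prod_1^n E^\times$ and $\xi\in\Xi$, I would first reduce — using \eqref{resolution}, which identifies $G$ with the kernel of $\nu$ — to an assertion about $T$: the obstruction to descending lives in a quotient controlled by $T(\mathbb A_F)$ modulo $T(F)$ and the local groups $[k]T(F_\mathfrak p)$ (outside $S_0$, $\mathbf T(\frak o_{F_\mathfrak p})$). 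Here is where Lemma \ref{power} enters decisively: choosing $k$ to be (a multiple of) $c\cdot 2t\cdot h(T,S_0)$, or rather feeding an appropriate auxiliary integer into Lemma \ref{power}, guarantees that an element of $T(F)$ that is locally a $[k]$-th power everywhere (in the integral sense just described) is already a $[k']$-th power in $T(F)$ for the smaller exponent we need, which is exactly what lets us absorb the $\mu([k]T(F_\mathfrak p))$ factors back into $G(F)Stab_\A(\mathbf X)$ after correcting by elements of $T(F)$ and using $\nu\circ\mu=[l]$. One also has to handle the finitely many primes in $S_1$ (from Lemma \ref{power}) and in $S_0$ separately, enlarging $k$ by a further factor if necessary and using the finiteness of $cls(T,S_0)$ to clean up the $S_0$-part; the global class-group input is what tells us only finitely many corrections are needed.

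The main obstacle I anticipate is the bookkeeping in this last step: translating the condition "$\lambda(g)\in(\prod_1^n E^\times)\cdot\Xi$" into a statement purely about $T$ to which Lemma \ref{power} applies, and then lifting the resulting $T(F)$-relation back through $\mu$ and the exact sequence \eqref{resolution} to a genuine element of $G(F)$ — the multiplication-by-$l$ discrepancy from $\nu\circ\mu=[l]$ means one must everywhere work with exponents divisible by $l$, so the final $k$ will be something like a suitable multiple of $l\cdot c\cdot 2t\cdot h(T,S_0)$, and one must check at each stage that the enlargement of $k$ does not destroy openness of $\Xi$ (it does not, since Lemma \ref{open} works for every positive integer $k$). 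Diagram-chasing the connecting maps in Galois cohomology of \eqref{resolution}, while keeping the adelic/integral distinctions straight, is the delicate part; the finiteness statements (Lemmas \ref{sha}, \ref{power} and finiteness of $cls(T,S_0)$) are exactly the tools that make it go through.
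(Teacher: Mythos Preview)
Your overall strategy --- assemble $\Xi$ from local pieces $\lambda(C_\frak p)\cdot\mu([k]T(F_\frak p))$ and prove injectivity by pushing through $\nu$ to $T$ and invoking Lemma~\ref{power} --- is the paper's. But two concrete points keep the argument from closing.

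First, your $\Xi$ is not a subgroup of $\prod_1^n\Bbb I_E$. At infinitely many finite $\frak p$ the set $\mu([k]T(F_\frak p))$ contains non-units, so the unrestricted product over all $\frak p$ contains elements that are non-units at infinitely many places and hence are not ideles. Your auxiliary claim that $\lambda(C_\frak p)\cdot\mu([k]T(F_\frak p))\supseteq\prod_1^n\frak o_{E_\frak p}^\times$ for almost all $\frak p$ is also false: for $x\in\prod_1^n\frak o_{E_\frak p}^\times$ to lie in that product one needs $\nu(x)\in[lk]T(F_\frak p)$, but $\nu$ maps $\prod_1^n\frak o_{E_\frak p}^\times$ onto ${\bf T}(\frak o_{F_\frak p})$, which is not contained in $[lk]T(F_\frak p)$. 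The paper instead takes $\Xi$ equal to $\prod_1^n\frak o_{E_\frak p}^\times$ at every $\frak p\notin S_1$ and uses the shape $\lambda(Stab)\cdot\mu([\cdots]T)$ only at the finitely many $\frak p\in S_1$; this simultaneously makes $\Xi$ an honest open subgroup of the ideles and forces $\nu(a)\in{\bf T}(\frak o_{F_\frak p})$ for $\frak p\notin S_1$, which is exactly the integral hypothesis Lemma~\ref{power} needs.

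Second, the missing ingredient at $S_0$ is not $h(T,S_0)$ but an integer $r=\prod_{\frak p\in S_0}r_\frak p$ with each $r_\frak p$ annihilating the torsion of $T(F_\frak p)$. The paper applies Lemma~\ref{power} with $k=l\cdot r$ to obtain $\nu(a)=[l\cdot r](u)$ for some $u\in T(F)$, whence $a\cdot\mu([r]u)^{-1}\in\lambda(G(F))$. At $\frak p\in S_0$, writing $i_\frak p=s_\frak p\cdot\mu([c\cdot 2t\cdot h(T,S_0)\cdot r]n_\frak p)$, the identity $\nu\circ\lambda=0$ forces $u+[c\cdot 2t\cdot h(T,S_0)]n_\frak p$ to be torsion in $T(F_\frak p)$; since $r$ kills that torsion by design, one gets $\mu([r]u)\cdot i_\frak p=s_\frak p\in\lambda(Stab({\bf X}(\frak o_{F_\frak p})))$. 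Your phrase ``enlarging $k$ by a further factor if necessary and using the finiteness of $cls(T,S_0)$ to clean up the $S_0$-part'' misidentifies the mechanism and leaves this step as a genuine gap.
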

\begin{proof}
By the condition 1) and 2) for the choice of $S_0$, one has
$$\lambda [Stab({\bf X}(o_{F_p }))]=\lambda [{\bf
G}(o_{F_p })]=ker[\nu: (\prod_{1}^no_{E_p }^\times) {\longrightarrow} {\bf T}(o_{F_p })]$$ for all
$p \not \in S_0$. For each $p \in S_0$, one can fix a
positive integer $r_p $ such that $r_p $ kills all torsion
points of $T(F_p )$. Let $$r=\prod_{p \in S_0} r_p
.$$

Let $S_1$ be a finite subset of $\Omega_F$ outside $\infty_F$  such
that Lemma \ref{power} holds for $k=l\cdot r$. Define
$$\Xi= [\prod_{p \in S_1}\lambda (Stab({\bf X}(o_{F_p })))\cdot
\mu ([c\cdot 2t \cdot h(T,S_0) \cdot r ]T(F_p ))]\times (
\prod_{p \not \in S_1}\prod_{1}^n o_{E_p }^\times).
$$  By Lemma \ref{open},
one has that $\Xi$ is an open subgroup of $\prod_{1}^n \Bbb I_{E}$
and $$\lambda [Stab_\A({\bf X})]\subseteq \Xi .$$

Consider $\sigma \in G(\A_F)$ such that $\lambda(\sigma) =a \cdot i$
with $$a \in \prod_{1}^n E^\times \ \ \ \text{and} \ \ \ i\in \Xi .
$$ Then
$$ \nu (a)\in [\prod_{p \in S_1}[c\cdot 2t \cdot h(T,S_0) \cdot l\cdot r ]T(F_p ) \times
\prod_{p \not\in S_1}{\bf T}(o_{F_p })].$$ By Lemma
\ref{power}, there is $u\in T(F)$ such that $\nu (a)=[l\cdot r](u)$.
Since $$\nu (a\cdot(\mu ([r](u)))^{-1})=\nu (a)-[l\cdot r](u)=0 , $$
one obtains that
$$a\cdot(\mu ([r](u)))^{-1} \in \lambda(G(F)) . $$

Let $i=( i_p  )_{p \in \Omega_F}$. One can write $$i_p =s_p  \cdot \mu([c\cdot 2t \cdot h(T,S_0) \cdot r ](n_p ))$$ with
$$s_p \in \lambda(Stab({\bf X}(o_{F_p }))) \ \ \ \text{and} \ \ \
n_p \in T(F_p )$$ for $p \in S_1$. This implies that
$$[l\cdot r](u + [c\cdot 2t \cdot h(T,S_0)](n_p ))=0$$ for $p \in S_1$. Therefore $u + [c\cdot 2t \cdot h(T,S_0)](n_p )$ is
a torsion point of $T(F_p )$ for each $p \in S_1$.

If $p \in S_0$, then $r$ kills $u + [c\cdot 2t \cdot
h(T,S_0)](n_p )$. One concludes that
$$\mu([r](u))\cdot\mu([c\cdot 2t \cdot h(T,S_0)\cdot r](n_p ))=1.
$$

If $p  \in S_1\setminus S_0$, one has $$ [r]u + [c\cdot 2t
\cdot h(T,S_0)\cdot r](n_p ) \in \bold T(o_{F_p })$$
by the condition 4) for the choice of $S_0$. By the condition 1) and
3) for the choice of $S_0$, one obtains that $$\mu([r]u)\cdot
\mu([c\cdot 2t \cdot h(T,S_0)\cdot r](n_p ))\in \prod_{1}^n
o_{E_p }^\times .$$

Therefore one concludes that
$$\mu([r]u) \cdot i\in \lambda [Stab_\A({\bf X})] \ \ \ \text{and} \ \ \ \sigma
\in G(F)Stab_\A({\bf X}) .$$  The proof is complete.
\end{proof}

Since $X_F$ is a trivial $G$-torsor over $F$, one can fix a rational
point $Q\in X_F(F)$ which induces the isomorphism $X_F\cong G$ as
$F$-varieties. Since $\bf X$ is separated over $o_F$, the
natural morphism
$$\prod_{p \in \Omega_F} \bold X(o_{F_p })
\longrightarrow X_F(\Bbb A_F)
$$ is injective. Define
$$f: \ \ \prod_{p \in \Omega_F} \bold X(o_{F_p }) \rightarrow X_{F}(\A_F)
\cong G(\A_F) \xrightarrow \lambda \prod_{1}^n \Bbb I_{E}. $$

\begin{cor} \label{main} Let $\Xi$ be an $\bf X$-admissible subgroup of
$\prod_{1}^n \Bbb I_{E}$. Then
$$\bold X(o_F) \neq \emptyset \ \ \ \ \ \text{if and only if} \ \ \ \ \
f[\prod_{p \in \Omega_F} \bold X(o_{F_p })]\cap
[(\prod_{1}^n E^\times )\cdot \Xi] \neq \emptyset.$$
\end{cor}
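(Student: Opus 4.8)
The plan is to prove both implications via the adelic description of $\bold X(\frak o_F)$ coming from the chosen rational point $P\in X_F(F)$, which identifies $X_F\cong G$ and hence $\prod_{\frak p}\bold X(\frak o_{F_\frak p})$ with an open subset of $G(\Bbb A_F)$ whose stabilizer under left translation is exactly $Stab_\A(\bf X)$. The forward direction is essentially formal: if $Q\in \bold X(\frak o_F)$, then $Q$ determines an element $g\in G(F)$ with $gP=Q$ in $X_F(F)$; applying $f$ to $Q$ (viewed as a point of $\prod_\frak p\bold X(\frak o_{F_\frak p})$) gives $\lambda(g)\cdot\lambda(1)$, and more carefully, for each place the $\frak p$-component of $f(Q)$ is $\lambda(g)$ times an element of $\lambda(Stab(\bold X(\frak o_{F_\frak p})))$, so $f(Q)\in \lambda(G(F))\cdot\lambda(Stab_\A(\bf X))\subseteq (\prod_1^n E^\times)\cdot\Xi$ by $\bf X$-admissibility. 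So the intersection is nonempty.

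For the converse, suppose there is $(x_\frak p)\in\prod_\frak p\bold X(\frak o_{F_\frak p})$ with $f((x_\frak p))=a\cdot i$ for some $a\in\prod_1^n E^\times$ and $i\in\Xi$. Using the identification $X_F\cong G$, write $(x_\frak p)$ as $\sigma=(\sigma_\frak p)\in G(\Bbb A_F)$, noting $\sigma_\frak p\in Stab(\bold X(\frak o_{F_\frak p}))\cdot(\text{base point})$, so $\lambda(\sigma)=a\cdot i$ is exactly the situation analyzed in the proof of Theorem~\ref{exsitence}. That argument shows $\sigma\in G(F)\,Stab_\A(\bf X)$; write $\sigma=g\cdot \tau$ with $g\in G(F)$ and $\tau\in Stab_\A(\bf X)=\prod_\frak p Stab(\bold X(\frak o_{F_\frak p}))$. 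Then for each $\frak p$, $x_\frak p=g\cdot\tau_\frak p\cdot(\text{base point in }\bold X(\frak o_{F_\frak p}))$, and since $\tau_\frak p$ stabilizes $\bold X(\frak o_{F_\frak p})$, we get that $g^{-1}x_\frak p\in\bold X(\frak o_{F_\frak p})$ for every $\frak p\in\Omega_F$. The point $g^{-1}P\in X_F(F)$ is therefore an $F$-rational point of $X_F$ lying in $\bold X(\frak o_{F_\frak p})$ for all $\frak p$; by the valuative/separatedness properties of $\bold X$ over $\frak o_F$ (a point of $X_F(F)$ integral at every $\frak p$ extends to an $\frak o_F$-point), this yields a point of $\bold X(\frak o_F)$, so $\bold X(\frak o_F)\neq\emptyset$.

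The main obstacle is the converse, and within it the genuinely substantive input is the injectivity built into the definition of $\bf X$-admissibility together with the class-group and torsion bookkeeping from Lemma~\ref{power}; but since Theorem~\ref{exsitence} already packages precisely this — it shows that $\lambda(\sigma)\in(\prod_1^n E^\times)\cdot\Xi$ forces $\sigma\in G(F)\,Stab_\A(\bf X)$ — the real work here is just the translation between $\bold X$-integral points and elements of $G(\Bbb A_F)$, and the final descent step from ``$F$-point, integral everywhere locally'' to ``$\frak o_F$-point'', which uses that $\bf X$ is separated and of finite type over $\frak o_F$ so that the local integral conditions glue. I would be careful to state explicitly that the chosen $P$ makes $Stab(\bold X(\frak o_{F_\frak p}))$ act simply transitively on the relevant coset, so that no ambiguity arises in extracting $g$; the rest is routine.
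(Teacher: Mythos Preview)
Your approach is the same as the paper's, but there is a bookkeeping slip in the converse that, as written, breaks the argument. From $\sigma=g\tau$ you conclude $g^{-1}x_\frak p=\tau_\frak p\cdot P\in\bold X(\frak o_{F_\frak p})$ ``since $\tau_\frak p$ stabilizes $\bold X(\frak o_{F_\frak p})$'', but that implication would require $P\in\bold X(\frak o_{F_\frak p})$, which is nowhere assumed; and in any case $g^{-1}P$ is not the object you have exhibited as locally integral (the elements $g^{-1}x_\frak p$ vary with $\frak p$). The correct move, and the one the paper makes, is to act by $\tau_\frak p^{-1}$ on $x_\frak p$: from $x_\frak p=g\tau_\frak p\cdot P$ one gets $g\cdot P=\tau_\frak p^{-1}\cdot x_\frak p$, and now $\tau_\frak p^{-1}\in Stab(\bold X(\frak o_{F_\frak p}))$ sends $x_\frak p\in\bold X(\frak o_{F_\frak p})$ back into $\bold X(\frak o_{F_\frak p})$. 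Thus the single $F$-point $gP$ (not $g^{-1}P$) lies in $\bold X(\frak o_{F_\frak p})$ for every $\frak p$, hence in $\bold X(\frak o_F)$.

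Two smaller remarks. You invoke the proof of Theorem~\ref{exsitence} to obtain $\sigma\in G(F)\,Stab_\A(\bold X)$, but this is immediate from Definition~\ref{adm}: the injectivity of the induced map $G(\Bbb A_F)/G(F)Stab_\A(\bold X)\to \prod_1^n\Bbb I_E/(\prod_1^n E^\times)\cdot\Xi$ is exactly what ``$\bold X$-admissible'' means; Theorem~\ref{exsitence} is needed only to guarantee that admissible $\Xi$ exist, not to use one once it is given. Likewise, the forward direction is simpler than you indicate: $f$ carries $\bold X(\frak o_F)\subseteq X_F(F)\cong G(F)$ directly into $\prod_1^n E^\times$, so no detour through $\lambda(Stab_\A(\bold X))$ is needed.
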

\begin{proof} Since $\bold X$ is separated over $o_F$, one has
$\bold X(o_F)\subseteq \bold X(o_{F_p })$ for all
$p \in \Omega_F$ and
$$\bold X(o_F) \subseteq \prod_{p \in \Omega_F} \bold X(o_{F_p })$$ by the diagonal map. If $\bold X(o_F)\neq \emptyset$,
then
$$f[\prod_{p \in \Omega_F} \bold X(o_{F_p })]\cap
[(\prod_1^n E^\times)\cdot \Xi] \supseteq f_E[\bold X(o_F)]\cap (\prod_1^n E^\times) \neq \emptyset$$ and the necessity
follows.

Conversely, there is  $$y_A \in \prod_{p \in \Omega_F} \bold
X(o_{F_{p }}) \ \ \ \text{ such that } \ \ \ f(y_A)\in
(\prod_1^n E^\times)\cdot \Xi .$$ By Definition \ref{adm}, there are
$\varrho \in G(F)$ and $\sigma_A\in Stab_A(\bold X)$ such that
$y_A=\varrho \sigma_A (Q)$. This implies that $$\varrho
(Q)=\sigma_A^{-1} (y_A)\in \prod_{p \in \Omega_F} \bold X(o_{F_p }).
$$ Therefore $\varrho (Q) \in \bold X(o_F)\neq \emptyset$  and
the proof is complete.
\end{proof}

If $\Xi$ is an $\bf X$-admissible subgroup of $\prod_{1}^n \Bbb
I_{E}$, there is an open subgroup $\Xi_i$ of $\Bbb I_{E}$ for each
$1\leq i\leq n$ such that $$\prod_{i=1}^n \Xi_i \subseteq \Xi .$$ By
the class field theory, there is a finite abelian extension
$K_{\Xi_i}/E$ such that the Artin map
\begin{equation}\label{art} \psi_{K_{\Xi_i}/E}: \ \ \ \Bbb
I_{E}/E^\times \Xi_i \cong Gal(K_{\Xi_i}/E)
\end{equation} gives the isomorphism for $1\leq i\leq n$. Projecting
the image of $f$ to the $i$-th component, one can define
$$f_{i}: \ \ \prod_{p \in \Omega_F} \bold X(o_{F_p }) \longrightarrow \prod_{1}^n \Bbb I_{E}
\longrightarrow \Bbb I_{E} $$ for $1\leq i\leq n$.

\begin{cor}\label{ef} With the notation as above,
$\bold X(o_F)\neq \emptyset$ if and only if there is
$$x_A\in\prod_{p \in \Omega_F} \bold X(o_{F_p }) \ \ \ \text{such that} \ \ \
\psi_{K_{\Xi_i}/E}(f_{i}(x_A))=1 $$ in $Gal(K_{\Xi_i}/E)$ for all
$1\leq i\leq n$.
\end{cor}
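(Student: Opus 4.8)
The statement is a direct reformulation of Corollary \ref{main} via class field theory, so the plan is to reduce the condition
$$f\Bigl[\prod_{\frak p\in \Omega_F}\bold X(\frak o_{F_\frak p})\Bigr]\cap\bigl[(\prod_1^n E^\times)\cdot\Xi\bigr]\neq\emptyset$$
from Corollary \ref{main} to the stated vanishing of Artin symbols. First I would recall that, by the remark preceding the corollary, there exist open subgroups $\Xi_i\subseteq\Bbb I_E$ with $\prod_{i=1}^n\Xi_i\subseteq\Xi$, and finite abelian extensions $K_{\Xi_i}/E$ so that (\ref{art}) realizes $\psi_{K_{\Xi_i}/E}:\Bbb I_E/E^\times\Xi_i\cong\Gal(K_{\Xi_i}/E)$. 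The key elementary observation is that for an idele $z=(z_1,\dots,z_n)\in\prod_1^n\Bbb I_E$, one has $z\in(\prod_1^n E^\times)\cdot\prod_{i=1}^n\Xi_i$ if and only if $z_i\in E^\times\Xi_i$ for every $i$, i.e. if and only if $\psi_{K_{\Xi_i}/E}(z_i)=1$ for all $i$; this is just the component-wise nature of the product together with the definition of the Artin isomorphism as having kernel exactly $E^\times\Xi_i$.

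Next I would carry out the two implications. For sufficiency, suppose $x_A\in\prod_{\frak p}\bold X(\frak o_{F_\frak p})$ satisfies $\psi_{K_{\Xi_i}/E}(f_i(x_A))=1$ for all $1\le i\le n$. By the observation above, $f(x_A)\in(\prod_1^n E^\times)\cdot\prod_{i=1}^n\Xi_i\subseteq(\prod_1^n E^\times)\cdot\Xi$, so the hypothesis of Corollary \ref{main} is met and $\bold X(\frak o_F)\neq\emptyset$. For necessity, if $\bold X(\frak o_F)\neq\emptyset$, Corollary \ref{main} gives some $y_A\in\prod_{\frak p}\bold X(\frak o_{F_\frak p})$ with $f(y_A)\in(\prod_1^n E^\times)\cdot\Xi$; but this lands in $\Xi$, not a priori in $\prod_{i=1}^n\Xi_i$, so a small extra argument is needed. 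The cleanest route is to go back to the proof of Corollary \ref{main}: there one actually produces $\varrho\in G(F)$ with $\varrho(P)\in\bold X(\frak o_F)$, and then taking $x_A$ to be the image of $\varrho(P)\in\bold X(\frak o_F)$ under the diagonal map, we get $f(x_A)=\lambda(\varrho(P))\in\lambda(G(F))\subseteq\prod_1^n E^\times$. Hence each component $f_i(x_A)\in E^\times\subseteq E^\times\Xi_i$, so $\psi_{K_{\Xi_i}/E}(f_i(x_A))=1$ for all $i$, as required.

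The only mildly delicate point — and the place I would be most careful — is the direction of the equivalence for necessity: one must not merely invoke the element $y_A$ furnished abstractly by Corollary \ref{main}, since its $f$-image sits in $E^\times\cdot\Xi$ and the $\Xi_i$ may be strictly smaller than the factors controlling $\Xi$; instead one exploits that $\bold X(\frak o_F)\neq\emptyset$ directly, picking an actual integral point and noting its $f$-image is a principal idele, which trivially has all Artin symbols equal to $1$. With that, both implications close and the proof is complete.
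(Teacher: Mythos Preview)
Your argument is correct and matches the paper's approach: the paper proves sufficiency exactly as you do, observing that $\psi_{K_{\Xi_i}/E}(f_i(x_A))=1$ for all $i$ forces $f(x_A)\in\prod_i(E^\times\cdot\Xi_i)\subseteq(\prod_1^n E^\times)\cdot\Xi$, and then appeals to the argument of Corollary~\ref{main}. You are in fact more careful than the paper about necessity, correctly flagging that one should take $x_A$ to be an actual integral point (so that $f(x_A)$ is principal) rather than the abstract element furnished by Corollary~\ref{main}; the paper leaves this implicit under ``the same argument as those in Corollary~\ref{main}.''
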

\begin{proof} Since $\psi_{K_{\Xi_i}/E}(f_i(x_A))=1$ for $1\leq i\leq n$, one
has $$f(x_A)\in [\prod_{i=1}^n (E^\times \cdot \Xi_i)]\subseteq
(\prod_{1}^n E^\times)\cdot \Xi .$$ Then the result follows from the
same argument as those in Corollary \ref{main}.
 \end{proof}

\bigskip

\section{Brauer-Manin obstruction}   \label{sec.brauer}

In this section, we will interpret $\bold X$-admissible subgroups in
terms of Brauer-Manin obstruction. This translation has been done
for one dimension tori in \cite{WX}. For general case, the argument
is similar. We keep the same notation as the previous section.

Since $X_F$ is a trivial torsor of $G$ over $F$, the fixed rational
$F$-point $Q$ gives $Gal(\bar{F}/F)$-module homomorphism $ \hat{G}
\longrightarrow  \bar F[X]^\times $, where $\bar F[X]^\times$ is the
global sections of $\bar X$. Then one obtains the homomorphism
\begin{equation} \label{shapiro}  H^2(E, M)\cong H^2(F,\Bbb
Z[Gal(E/F)]\otimes_\Bbb Z M) \rightarrow H^2(F, \hat{G}) \rightarrow
H^2(F,\bar F[X]^\times)  \end{equation} by Shapiro's lemma. Applying
Hochschild-Serre spectral sequence (see Theorem 2.20 of Chapter III
in \cite{Milne80}) in \'etale cohomology
$$H^p(F,H^q(\bar X, \Bbb G_m))\Rightarrow H^{p+q}(X_F,\Bbb G_m),$$
one has
$$ \phi_Q : \ \ \  H^2(E, M) \longrightarrow H^2(F,\bar
F[X]^\times)\longrightarrow Br_1(X_F). $$

Fix the basis $\{e_1, \cdots, e_n\}$ of $M$ such that the projection
$$ p_i: \ \ \ \prod_{1}^n R_{E/F}(\Bbb G_m) \longrightarrow
R_{E/F}(\Bbb G_m)$$ to the $i$-th component is given by $\Bbb Z
e_i\subseteq M$ for each $1\leq i\leq n$. Then the evaluation of the
following morphism by using the fixed rational point $Q$
\begin{equation} \label{evaluation} X_F \xrightarrow{Q} G \xrightarrow{\lambda} \prod_{1}^n
R_{E/F}(\Bbb G_m) \xrightarrow{p_i} R_{E/F}(\Bbb G_m) \end{equation}
at $\prod_{p \in \Omega_F} \bold X(o_{F_p })$ is
$f_i$ in \S \ref{sec.construction}.

Let $\Xi$ be an $\bf X$-admissible subgroup of $\prod_{1}^n \Bbb
I_{E}$ and $\Xi_i$ be an open subgroup of $\Bbb I_{E}$ such that
$$\prod_{i=1}^n \Xi_i \subseteq \Xi$$ for each $1\leq i\leq n$. Let
$K_{\Xi_i}/E$ be a finite abelian extension satisfying (\ref{art})
for $1\leq i\leq n$. Since
$$H^2(Gal(K_{\Xi_i}/E), \Bbb Z e_i) \cong H^1(Gal(K_{\Xi_i}/E), \Bbb
Q/\Bbb Z) = Hom(Gal(K_{\Xi_i}/E), \Bbb Q/\Bbb Z)$$ is finite, one
has the image of the inflation map composed with $\phi_Q$
$$H^2(Gal(K_{\Xi_i}/E), \Bbb Z e_i) \rightarrow H^2(E,\Bbb Z
e_i)\subseteq H^2(E,M) \xrightarrow{\phi_Q} Br_1(X_F) $$ is finite
and denoted by $b(\Xi_i)$.

\begin{definition} For any $\bf
X$-admissible subgroup  $\Xi$ of $\prod_{1}^n \Bbb I_{E}$ and an
open subgroup $\Xi_i$ of $\Bbb I_{E}$ with
$$\prod_{i=1}^n \Xi_i \subseteq \Xi$$ for each $1\leq i\leq n$, one defines
the finite group $b(\Xi)$ of $Br_1(X_F)$ to be generated by
$b(\Xi_i)$ for $1\leq i\leq n$.
\end{definition}

One can reformulate Corollary \ref{main}  in terms of Brauer-Manin
obstruction.

\begin{thm}\label{brauer} Let $\Xi$ be an $\bf
X$-admissible subgroup of $\prod_{1}^n \Bbb I_{E}$. Then
$$\bold X(o_F) \neq \emptyset \ \ \ \text{ if and only if} \ \
\ \ [\prod_{p \in \Omega_F} \bold X(o_{F_p })]^{b(\Xi)}\neq \emptyset.$$
\end{thm}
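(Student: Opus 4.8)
The plan is to deduce Theorem \ref{brauer} from Corollary \ref{ef} by showing that, for a point $x_A = (x_\frak p) \in \prod_{\frak p\in\Omega_F}\bold X(\frak o_{F_\frak p})$, the condition $\psi_{K_{\Xi_i}/E}(f_i(x_A)) = 1$ for all $1\le i\le n$ is equivalent to $\sum_{\frak p\in\Omega_F}\inv_\frak p(s(x_\frak p)) = 0$ for all $s\in b(\Xi)$. Since $b(\Xi)$ is generated by the images $b(\Xi_i)$, and each $b(\Xi_i)$ is the image under $\phi_P$ (composed with inflation) of $H^2(\Gal(K_{\Xi_i}/E),\Z e_i)$, it suffices to handle one index $i$ at a time and one generator of the cyclic-character group $\Hom(\Gal(K_{\Xi_i}/E),\Q/\Z)$ at a time; so the whole statement reduces to a single compatibility between the class field theory Artin map and the Brauer group evaluation pairing.

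First I would set up the reciprocity computation. Fix $i$ and a character $\chi \in \Hom(\Gal(K_{\Xi_i}/E),\Q/\Z) \cong H^2(\Gal(K_{\Xi_i}/E),\Z e_i)$; write $\alpha_\chi \in H^2(E,\Z e_i) \subseteq H^2(E,M)$ for its inflation and $A_\chi = \phi_P(\alpha_\chi) \in Br_1(X_F)$ for the corresponding Brauer class. The key point is that by construction of $\phi_P$ via Shapiro's lemma and the Hochschild--Serre spectral sequence, the evaluation $A_\chi(x_\frak p) \in Br(F_\frak p)$ at a local point $x_\frak p$ is computed, via the morphism \eqref{evaluation}, as the local component of a cyclic algebra built from $\chi$ and the idele $f_i(x_A) \in \Bbb I_E$; concretely, for each place $\frak p$ of $F$ and each place $\frak P$ of $E$ above it, $\inv_{\frak P}$ of that local algebra equals $\chi$ evaluated at the local Artin symbol of the $\frak P$-component of $f_i(x_A)$. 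Summing over all places and using that $\sum_{\frak P}\inv_{\frak P}$ on $\Bbb I_E$ factors through $\Bbb I_E/E^\times$ by global class field theory for $E$, one gets
$$\sum_{\frak p\in\Omega_F}\inv_\frak p(A_\chi(x_\frak p)) = \chi\bigl(\psi_{K_{\Xi_i}/E}(f_i(x_A))\bigr).$$
(This is exactly the one-dimensional computation carried out in \cite{WX}; the only new feature here is the product over the $n$ factors and the intermediate free module $M$, both of which are handled place-by-place and index-by-index.)

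Granting this identity, the theorem follows: if $\bold X(\frak o_F)\ne\emptyset$ then $f_i(x_A) \in E^\times\Xi_i$ for a suitable $x_A$ coming from an integral point (by Corollary \ref{ef}), hence $\psi_{K_{\Xi_i}/E}(f_i(x_A)) = 1$, hence every $\sum_\frak p \inv_\frak p(s(x_\frak p))$ vanishes for $s$ a generator of $b(\Xi_i)$ and therefore for all $s \in b(\Xi)$, giving $[\prod_\frak p \bold X(\frak o_{F_\frak p})]^{b(\Xi)}\ne\emptyset$; conversely, if that Brauer set is nonempty, pick $x_A$ in it, and then $\chi(\psi_{K_{\Xi_i}/E}(f_i(x_A))) = 0$ for every $\chi \in \Hom(\Gal(K_{\Xi_i}/E),\Q/\Z)$ forces $\psi_{K_{\Xi_i}/E}(f_i(x_A)) = 1$ for each $i$, so Corollary \ref{ef} yields $\bold X(\frak o_F)\ne\emptyset$.

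The main obstacle is the functoriality bookkeeping in the displayed reciprocity identity: one must check that the cup-product/evaluation pairing $Br_1(X_F) \times X_F(F_\frak p) \to Br(F_\frak p)$ applied to $\phi_P(\alpha_\chi)$ really does match the Shapiro-transferred cyclic class, and that the sign and normalization conventions in the Hochschild--Serre edge map, in Shapiro's lemma for $\Z[\Gal(E/F)]\otimes_\Z M$, and in the local/global Artin reciprocity all line up so that the global sum equals $\chi$ composed with the global Artin map for $E$ rather than its inverse or a twist. Once the pairing is identified on a single $\Z e_i$-summand and a single character, the passage to all of $b(\Xi)$ is purely formal, since both sides are additive in $s$ and the finite group $b(\Xi)$ is generated by the classes we have analyzed.
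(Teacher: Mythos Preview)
Your proposal is correct and follows essentially the same approach as the paper: both arguments reduce to Corollary~\ref{ef} via the key identity $\sum_{\frak p}\inv_\frak p(A_\chi(x_\frak p)) = \chi(\psi_{K_{\Xi_i}/E}(f_i(x_A)))$, established by unwinding $\phi_P$ through Shapiro's lemma, the cup product with $\delta_i(\chi)$, and the local--global compatibility of the Artin map (the paper cites (8.1.4), (7.1.4), and (8.1.11) of \cite{NSW} for precisely the bookkeeping you flag). The only cosmetic difference is that the paper dispatches the necessity in one line (``follows from class field theory'') rather than routing it back through Corollary~\ref{ef}.
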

\begin{proof} The necessity follows from the class field theory and one only needs to show the sufficiency.
Applying Galois cohomology to the short exact sequence
$$0\longrightarrow \Bbb Z\longrightarrow \Bbb Q \longrightarrow \Bbb
Q/\Bbb Z \longrightarrow 0 $$ with the trivial action, one obtains
 \begin{equation} \label{connection} \delta_i \ \  Hom(Gal(K_{\Xi_i}/E), \Bbb Q/\Bbb Z)
\cong H^2(Gal(K_{\Xi_i}/E), \Bbb Z)  \end{equation} with $1\leq
i\leq n$.  For any $\chi \in Hom(Gal(K_{\Xi_i}/E), \Bbb Q/\Bbb Z)$,
the cup product gives
$$\xi_i=e_i\cup \delta_i(\chi) \in H^2(Gal(K_{\Xi_i}/E), {\Bbb
Z}e_i) \ \ \ \text{and} \ \ \ \beta_i =\phi_Q (\xi_i)\in b(\Xi_i)
$$ with $1\leq i\leq n$.

Let $$(x_p )_{p \in \Omega_F}\in [\prod_{p \in
\Omega_F} \bold X(o_{F_p })]^{b(\Xi)}  $$ and evaluate
$\beta_i$ at $(x_p )_{p \in \Omega_F}$. Then
$$inv_p (\beta_i (x_p ))=\sum_{P |p } inv_{P }(f_i(x_p )\cup \delta_i(\chi))$$ by (\ref{shapiro}),
(\ref{evaluation}), (8.1.4) Proposition and (7.1.4) Corollary in
\cite{NSW}, where $P $'s are all primes in $E$ above $p $.
One has
$$\sum_{P}inv_P (f_i(x_p )\cup
\delta_i(\chi))=0 $$ where $P$ runs over all primes of $E$.
This implies
$$\chi(\psi_{K_{\Xi_i}/E} (f_i[(x_p )_{p \in \Omega_F}]))=0$$
for all $\chi \in Hom(Gal(K_{\Xi_i}/E), \Bbb Q/\Bbb Z)$ by (8.1.11)
Proposition in \cite{NSW} and (\ref{art}). Therefore
$$\psi_{K_{\Xi_i}/E} (f_i[(x_p )_{p \in \Omega_F}])=0 $$ for $1\leq i\leq n$.
The result follows from Corollary \ref{ef}.
\end{proof}

\bigskip

\section{Strong approximation}   \label{sec.app}

As an application, we'll prove the strong approximation theorem for
groups of multiplicative type. Such strong approximation theorem has
been established for algebraic tori by Harari in \cite{Ha08} and
generalized to connected reductive groups by Demarche in \cite{Dem}.
We give the different proof for groups of multiplicative type where
$G$ is not assumed to be connected. We keep the same notation as
that in the previous sections.

By (8.1.16) Corollary in \cite{NSW}, the evaluation gives the pair
\begin{equation} \label{pair} G(\Bbb A_F) \times H^2(F, \hat{G})
\xrightarrow{inv} \Bbb Q/\Bbb Z
\end{equation}
such that $G(F)$ is lying in the left kernel of the above pair. Let
$G(F_\infty)^0$ be the set of elements in $G(F_\infty)$ which are
lying in the left kernel of (\ref{pair}).

\begin{thm}\label{sat} $G(F)\cdot G(F_\infty)^0$ is dense in the left kernel of (\ref{pair}).
\end{thm}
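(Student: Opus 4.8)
The plan is to reduce Theorem \ref{sat} to the integral-point machinery of Sections 1 and 2 by a standard "add an auxiliary finite set of places" trick, and then to run Theorem \ref{brauer} through. Concretely, let $W$ be an open subgroup of $G(\Bbb A_F)$ of the form $W = \prod_{\frak p \in S} W_\frak p \times \prod_{\frak p \notin S} {\bf G}(\frak o_{F_\frak p})$ for some finite set $S \supseteq S_0$ and open subgroups $W_\frak p \subseteq G(F_\frak p)$; to prove density of $G(F)\cdot G(F_\infty)^0$ in the left kernel of (\ref{pair}) it suffices to show that for every such $W$ containing an element $(g_\frak p)$ of the left kernel (with $g_\frak p = 1$ for $\frak p \notin S$, $\frak p$ finite), the set $G(F) \cdot (W \cap G(F_\infty)^0)$ meets $W$. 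The first step is to build, from the datum $W$, a separated $\frak o_F$-scheme of finite type ${\bf X}$ whose generic fiber is the trivial $G$-torsor and such that $\prod_{\frak p} {\bf X}(\frak o_{F_\frak p})$ is (the translate of) exactly the open set $W$ inside $G(\Bbb A_F) = X_F(\Bbb A_F)$; this is possible because each $W_\frak p$ is open and one may thicken $G$ over $\frak o_F$ appropriately. By construction $\mathrm{Stab}_{\Bbb A}({\bf X}) = W$.

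The second step is to check that the local condition (\ref{loc}) holds for this ${\bf X}$, i.e.\ $\prod_{\frak p} {\bf X}(\frak o_{F_\frak p}) \neq \emptyset$: this is immediate since $(g_\frak p) \in W$. The third, and genuinely substantive, step is to verify the Brauer–Manin condition $[\prod_{\frak p \in \Omega_F} {\bf X}(\frak o_{F_\frak p})]^{b(\Xi)} \neq \emptyset$ for an ${\bf X}$-admissible $\Xi$ furnished by Theorem \ref{exsitence}. Here one uses that every class in $b(\Xi) \subseteq Br_1(X_F)$ is, via $\phi_P$ and Shapiro's lemma, the image of a class coming from $H^2(E,M)$, hence ultimately from $H^2(F,\hat G)$; therefore evaluating such a Brauer class at an adelic point $(x_\frak p)$ and summing local invariants computes exactly the pairing (\ref{pair}) against the corresponding element of $H^2(F,\hat G)$ (up to the translation by $P$, which contributes a term killed by the reciprocity law since it comes from $G(F)$). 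So one takes the given left-kernel element $(g_\frak p) \in W$, modifies it at the infinite places to land in $G(F_\infty)^0$ — this is exactly where the definition of $G(F_\infty)^0$ enters: for the purposes of the sum of invariants the archimedean components may be replaced by any element of $G(F_\infty)^0$ without changing that the total sum is $0$ — and concludes that this adelic point lies in $[\prod_\frak p {\bf X}(\frak o_{F_\frak p})]^{b(\Xi)}$.

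Finally, by Theorem \ref{brauer} we get ${\bf X}(\frak o_F) \neq \emptyset$; any $\gamma \in {\bf X}(\frak o_F)$, viewed through the trivialization $X_F \cong G$ by $P$, is an element of $G(F)$ lying in $\prod_\frak p {\bf X}(\frak o_{F_\frak p}) = W$ — and, tracking the archimedean modification, in $G(F) \cdot (W \cap G(F_\infty)^0)$ — which is the required density statement. The main obstacle I anticipate is the bookkeeping in the third step: one must be careful that the finitely many Brauer classes in $b(\Xi)$ really do detect membership in the left kernel of (\ref{pair}) and not merely in some smaller subgroup, i.e.\ that the ${\bf X}$-admissible $\Xi$ can be chosen so that $b(\Xi)$ "saturates" the relevant part of $Br_1(X_F)$; this is handled by the injectivity clause in Definition \ref{adm} together with class field theory (\ref{art}), exactly as in the proof of Theorem \ref{brauer}, but it requires that the element $(g_\frak p)$ one starts with be genuinely in the left kernel of the full pairing with $H^2(F,\hat G)$, which is why the statement is about density in that kernel rather than in all of $G(\Bbb A_F)$.
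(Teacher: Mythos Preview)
Your core strategy is the paper's: pass to an open subgroup $\Xi$ of $\prod_1^n \Bbb I_E$ whose associated class-field characters all come from $H^2(F,\hat G)$ via $\lambda^*$, use that an element $g$ in the left kernel of (\ref{pair}) therefore satisfies $\lambda(g)\in(\prod E^\times)\Xi$, and conclude by the injectivity of $\lambda$ modulo $G(F)\cdot U$. So the substance is right.

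Two points deserve correction, though. First, your reduction step is misstated. You let $W$ be an open \emph{subgroup} containing a left-kernel element $(g_\frak p)$ and then ask that $G(F)\cdot(W\cap G(F_\infty)^0)$ meet $W$; but $1$ already lies in both sets, so this is vacuous. The paper's reduction is cleaner: it is enough to show that the entire left kernel lies in $G(F)\cdot U$ for every open subgroup $U\supseteq G(F_\infty)$. The role of $G(F_\infty)^0$ is then automatic, because if $g$ is in the left kernel and $g=\gamma u$ with $\gamma\in G(F)$, then $u=\gamma^{-1}g$ is again in the left kernel, and its archimedean part is exactly what $G(F_\infty)^0$ means. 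No ``modification at infinity'' is needed.

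Second, and more structurally, the paper does \emph{not} build an auxiliary scheme $\mathbf X$ and then invoke Theorems \ref{exsitence} and \ref{brauer} as black boxes. It simply reruns the construction of $\Xi$ from the proof of Theorem \ref{exsitence} with the open subgroup $U$ in place of $\mathrm{Stab}_{\Bbb A}(\mathbf X)$, obtaining directly that
\[
\lambda:\ G(\Bbb A_F)/G(F)\cdot U \longrightarrow \textstyle\prod_1^n \Bbb I_E/(\prod_1^n E^\times)\cdot \Xi
\]
is injective; then the pairing identity $\sum_i\chi_i(\psi_{K_{\Xi_i}/E}(\sigma_i))=\mathrm{inv}(g,\lambda^*(\chi_1,\dots,\chi_n))$ shows $\lambda(g)\in(\prod E^\times)\Xi$ whenever $g$ is in the left kernel, and injectivity finishes. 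Your detour through an integral model $\mathbf X$ with $\prod_\frak p\mathbf X(\frak o_{F_\frak p})$ equal to a prescribed open coset is unnecessary and would itself require an argument (realising an arbitrary open subgroup of $G(F_\frak p)$ as the $\frak o_{F_\frak p}$-points of a model over $\frak o_F$). Dropping $\mathbf X$ and arguing directly with $U$, as the paper does, both shortens the proof and removes this unjustified step.
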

\begin{proof} By the definition of $G(F_\infty)^0$,  one only needs to show that
$G(F) \cdot U$ contains the left kernel of (\ref{pair}) for any open
subgroup $U$ of $G(\Bbb A_F)$ which contains $G(F_\infty)$. Without
loss of generality, one can assume that
$$U= G(F_\infty)\times \prod_{p < \infty_F} U_p  $$ where $U_p $ is an open subgroup
of $G(F_p )$ for all $p < \infty_F$ such that there is a
finite subset $S_0$ of $\Omega_F$ outside $\infty_F$ satisfying
condition 1)-4) in \S \ref{sec.construction} and
$$U_p ={\bf G}(o_{F_p })$$ for $p \not\in S_0$.
For each $p \in S_0$, one can fix a positive integer $r_p $ such that $r_p $ kills all torsion points of $T(F_p )$.
Let $r=\prod_{p \in S_0} r_p $ and  $S_1$ be a finite
subset of $\Omega_F$ outside $\infty_F$  such that Lemma \ref{power}
holds for $k=l\cdot r$. Define
$$\Xi= [\prod_{p \in S_1}\lambda (U_p )\cdot
\mu ([c\cdot 2t \cdot h(T,S_0) \cdot r ]T(F_p ))]\times (
\prod_{p \not \in S_1}\prod_{1}^n o_{E_p }^\times).
$$  By Lemma \ref{open},
one has that $\Xi$ is an open subgroup of $\prod_{1}^n \Bbb I_{E}$
and $\lambda (U) \subseteq \Xi$. Moreover the induced map
\begin{equation} \label{embed}  \lambda: \ \ \ G(\Bbb A_F)/G(F)\cdot
U \longrightarrow \prod_{1}^n \Bbb I_{E}/(\prod_{1}^n E^\times)\cdot
\Xi \end{equation} is injective by the exact same arguments in
Theorem \ref{exsitence}.

Let $\Xi_i$ be an open subgroup of $\Bbb I_{E}$ for each $1\leq
i\leq n$ such that
$$\prod_{i=1}^n \Xi_i \subseteq \Xi $$ and $K_{\Xi_i}/E$ be a finite
abelian extension such that $(\ref{art})$ holds for $1\leq i\leq n$.
Then
$$0\rightarrow \prod_{i=1}^n
Hom(Gal(K_{\Xi_i}/E), \Bbb Q/\Bbb Z)\rightarrow \prod_1^n
Hom(Gal(\bar{F}/E), \Bbb Q/\Bbb Z)\cong H^2(E,M). $$

Let
$$g=(g_p ) \in G(\Bbb A_F) \ \ \ \text{ and } \ \ \ \lambda(g)=(\sigma_1, \cdots,
\sigma_n)$$ with $\sigma_i\in \Bbb I_E$ for $1\leq i\leq n$. By the
functoriality of evaluation and (8.1.11) Proposition in \cite{NSW},
one obtains
$$\sum_{i=1}^n \chi_i(\psi_{K_{\Xi_i}/E}(\sigma_i))= inv(g, \lambda^*(\chi_1,\cdots,\chi_n)) $$
by (\ref{pair}) for all
$$(\chi_1, \cdots, \chi_n)\in \prod_{i=1}^n Hom(Gal(K_{\Xi_i}/E),
\Bbb Q/\Bbb Z) . $$

If $g$ is in the left kernel of (\ref{pair}), then $\sigma_i\in
E^\times \Xi_i$ for $1\leq i\leq n$. Therefore
$$\lambda(g) \in (\prod_1^n E^\times )\cdot \Xi .
$$ The injectivity of (\ref{embed}) implies that $g\in G(F)\cdot U$.
The proof is complete.
\end{proof}

When $G$ is a finite commutative group scheme over $F$, the above
result follows from Poitou-Tate (see (8.6.13) in \cite{NSW}).

\begin{rem}\label{infty} When $F_{p }=\Bbb R$, one
defines
$$ N_G(F_{p })= \{ x+\bar x: x\in  G(\Bbb C) \ \text{and $\bar x$ is the conjugate point of $x$} \}.$$
When $F_{p }=\Bbb C$, one defines  $$N_G(F_{p })=G(F_{p }).$$ Let
$$N_G(F_\infty)=\prod_{p \in \infty_F} N_G(F_{p }). $$

By the local duality over $\Bbb R$ by (7.2.17) Theorem in
\cite{NSW}, one has that $$N_G(F_\infty)\subseteq G(F_\infty)^0 $$
with the finite index. One can expect that $G(F)\cdot N_G(F_\infty)$
is dense in the left kernel of (\ref{pair}) by more careful study on
archimedean places.
\end{rem}

\bigskip

\section{Sum of two squares }
The natural extension of Fermat-Gauss' theorem about the sum of two
squares over $\Z$ to the ring of integers of quadratic field $F$ was
already studied by Niven for $F=\Bbb Q(\sqrt{-1})$ in \cite{Ni} and
by Nagell for $F=\Bbb Q(\sqrt{d})$ where $$d=\pm 2, \pm 3, \pm 5,
\pm 7, \pm 11, \pm 13, \pm 19, \pm 43, \pm 67, \pm 163$$ in
\cite{Na53} and \cite{Na61}. Both followed Gauss' original idea.
Therefore the class number one is assumed and the results obtained
there always satisfy the local-global principle. In this section, we
will apply the method developed in the previous sections to study
this question and gives an example of which the local-global
principle is no longer true.

It should be pointed out that our method only produces the idelic
class groups of $\Bbb Q(\sqrt{d}, \sqrt{-1})$ for solving the
problem of sum of two squares. In order to get the explicit
conditions for the sum of two squares, one needs further to
construct the explicit abelian extensions of $\Bbb Q(\sqrt{d},
\sqrt{-1})$ corresponding to the idelic class groups by class field
theory. Such explicit construction is a wide open problem (Hilbert's
12-th problem) in general but ad hoc method. In his series papers
\cite{W1} and \cite{W2}, the first named author gives explicit
construction for infinitely many $d$ and solves the sum of two
squares over infinitely many quadratic fields.

Let $l$ be a prime with $l\equiv -1 \mod 8$ and $F=\Q(\sqrt{-2l})$.
We will study the sum of two squares over $o_F$. Let
$$E=F(\sqrt{-1}) \ \ \ \text{and} \ \ \ \Theta=E(\sqrt[4]{l}) . $$

\begin{lem} \label{unr}The field $\Theta/E$ is
unramified over all primes except the prime above $l$.
\end{lem}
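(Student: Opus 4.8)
The plan is to analyze the ramification of $\Theta = E(\sqrt[4]{l})$ over $E = \Q(\sqrt{-2l}, \sqrt{-1})$ prime by prime, using the fact that $\Theta$ is obtained from $E$ by adjoining a fourth root of the integer $l$. The key observations are: (i) a prime $\mathfrak{P}$ of $E$ not lying over $l$ and not lying over $2$ is unramified in $E(\sqrt[4]{l})$ because $l$ is a $\mathfrak{P}$-adic unit and $4 \in \mathfrak{o}_{E_\mathfrak{P}}^\times$, so the polynomial $x^4 - l$ is separable mod $\mathfrak{P}$ and Hensel's lemma applies; (ii) the only delicate case is the prime(s) above $2$, where I must show that the ramification coming from adjoining $\sqrt[4]{l}$ is already absorbed in the extension $E/F$ or $F/\Q$. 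So the real content is a $2$-adic computation exploiting the congruence $l \equiv -1 \bmod 8$.

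First I would dispose of the generic primes: for $\mathfrak{P} \nmid 2l$, since $v_\mathfrak{P}(l) = 0$ and $x^4 - l$ has nonzero discriminant mod $\mathfrak{P}$ (the discriminant of $x^4 - l$ is a unit times a power of $l$ and of $2$), the extension $E_\mathfrak{P}(\sqrt[4]{l})/E_\mathfrak{P}$ is unramified. For the prime above $l$ we make no claim — it is explicitly excluded. So everything reduces to the primes of $E$ above $2$.

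Next, the $2$-adic step. Because $l \equiv -1 \bmod 8$, in $\Q_2$ we have $l \equiv -1 \equiv 7 \bmod 8$, hence $-l \equiv 1 \bmod 8$, so $-l$ is a square in $\Q_2^\times$; equivalently $\sqrt{l} \in \Q_2(\sqrt{-1}) = \Q_2(i)$. Thus over the completion at $2$, adjoining $\sqrt{l}$ to $E$ gives nothing new once $\sqrt{-1}$ is present, i.e. $\sqrt{l} \in E_\mathfrak{P}$ for every $\mathfrak{P} \mid 2$. It then remains to show that $\sqrt[4]{l} = \sqrt{\sqrt{l}}$ generates an unramified (possibly trivial) extension of $E_\mathfrak{P}$, i.e. that $\sqrt{l}$, which is a $2$-adic unit of $E_\mathfrak{P}$, is a square times an unramified-generating unit. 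Writing $\sqrt{l} = u$ with $u \in E_\mathfrak{P}^\times$ a unit, I would check that $u \equiv \square \bmod$ (the relevant power of the maximal ideal needed for Hensel), using that $\Q_2(\sqrt{-1})/\Q_2$ is ramified and computing the unit group of the relevant local field modulo squares; concretely one verifies $\sqrt{l} \equiv \pm i \bmod 4$ or a similar congruence in $\Q_2(i)$ forces $E_\mathfrak{P}(\sqrt[4]{l})/E_\mathfrak{P}$ to be contained in the maximal unramified extension. This hands-on $2$-adic squareclass bookkeeping is the step I expect to be the main obstacle, since it requires pinning down $\sqrt{l} \bmod (E_\mathfrak{P}^\times)^2$ precisely rather than up to units.

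Finally, assembling: since every prime of $E$ except those above $l$ is unramified in $\Theta$, the extension $\Theta/E$ is unramified outside the prime above $l$, which is the assertion. I would also remark that $\Theta/E$ is indeed a degree-$4$ (or at least nontrivial) extension — needed so that the statement is not vacuous — by noting $l$ is not a fourth power and $\sqrt{l} \notin E$ globally even though it is locally at $2$, which follows because $E/\Q$ is ramified only at primes dividing $2l$ while $\sqrt{l}$ would force ramification behavior incompatible with $\sqrt{-2l} \in E$; but for the lemma as stated only the unramifiedness claim is required.
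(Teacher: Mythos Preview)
Your overall plan coincides with the paper's: reduce to the prime(s) above $2$, and there exploit $l\equiv -1\pmod 8$. You correctly observe that $-l$ is a square in $\Q_2$, so $\sqrt{l}\in\Q_2(i)$, and that the remaining issue is whether adjoining $\sqrt{\sqrt{l}}$ to $E_v$ is unramified. But you stop exactly at the point that carries all the content, saying only that one should ``check that $u\equiv\square\bmod$ (the relevant power of the maximal ideal)'' and that this ``hands-on $2$-adic squareclass bookkeeping'' is the main obstacle. As written this is a genuine gap: nothing you have said pins down the square class of $\sqrt{l}$ in $E_v^\times/(E_v^\times)^2$, and without that the lemma is unproved.

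The paper closes this gap cleanly, and in fact your own heuristic ``$\sqrt{l}\equiv\pm i\pmod 4$'' is the right clue. Write $-l=s^2$ with $s\in 1+4\Z_2$ (Hensel), so that $\sqrt{l}=is$ and
\[
x^4-l=x^4+s^2=(x^2-s\sqrt{-1})(x^2+s\sqrt{-1})\quad\text{over }E_v.
\]
The key identification you are missing is $E_v=\Q_2(\sqrt{-1},\sqrt{2})=\Q_2(\zeta_8)$; in particular $\sqrt{-1}=\zeta_8^{2}$ already has a square root $\zeta_8$ in $E_v$, and $-1=(\sqrt{-1})^2$ is a square there as well. Hence $E_v(\sqrt{\pm s\sqrt{-1}})=E_v(\sqrt{s})$, and since $s\equiv 1\pmod 4$ the extension $\Q_2(\sqrt{s})/\Q_2$ is unramified (trivial if $s\equiv 1\pmod 8$, the unramified quadratic extension if $s\equiv 5\pmod 8$), so $E_v(\sqrt{s})/E_v$ is unramified. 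That is the computation you needed to finish; once you supply it, your argument and the paper's are the same.
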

\begin{proof} Since $2$ is totally ramified in $E/\Bbb Q$, there is a unique prime $v$ of $E$ over $2$.
One only needs to show that $v$ is unramified in $\Theta/E$.

Since $\sqrt{l}\not \in E$, one obtains that $x^4-l$ is irreducible
over $E$ by the Kummer theory. By Hensel's Lemma, there is $s\in
(1+4 \Z_2)$ such that $s^2=-l$ over $\Z_2$. Since
$$E_v=\Q_2(\sqrt{-1},\sqrt{2})=\Q_2 (\zeta_8) $$ where $\zeta_8$ is the primitive $8-th$ root of unity, one has
$$x^4-l=x^4+s^2=x^4-s^2(\sqrt{-1})^2=(x^2-s\sqrt{-1})(x^2+s\sqrt{-1})$$ over
$E_v$. By $\zeta_8^2=\sqrt{-1}$ and $-1=(\sqrt{-1})^2$, one
concludes that $\Theta/E$ is unramified over $v$.
\end{proof}

It is clear that $l$ is ramified in $F/\Q$ and the unique prime of
$F$ over $l$ is also denoted by $l$. Since $l\equiv 3 \mod 4$, one
has that $l$ is inert in $E/F$. The unique prime of $E$ above
$l$ is denoted by $l$ as well and
$$E_l=\Q_l(\sqrt{-1}, \sqrt{l}).$$

\begin{lem} \label{computation} If $N_{E_l/F_l}(\xi)=1$, then the
Hilbert symbols over $E_l$
$$(\xi,\sqrt{l})_l=(\xi,-\sqrt{l})_l=1 .$$

If $N_{E_l/F_l}(\xi)=-1$, then the Hilbert symbols over
$E_l$
$$(\xi,\sqrt{l})_l=(\xi,-\sqrt{l})_l=-1 .$$
\end{lem}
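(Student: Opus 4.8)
The plan is to compute the Hilbert symbol $(\xi,\pm\sqrt{l})_{\frak L}$ over the local field $E_{\frak L}=\Q_l(\sqrt{-1},\sqrt{l})$ by interpreting it via local class field theory as the image of $\xi$ under the Artin map attached to the quadratic extensions $\Theta_{\frak L_1}/E_{\frak L}$ and $\Theta_{\frak L_2}/E_{\frak L}$. Recall from the discussion preceding the lemma that $\Theta_{\frak L_1}=E_{\frak L}(\sqrt{\sqrt l})$ and $\Theta_{\frak L_2}=E_{\frak L}(\sqrt{-\sqrt l})$, so $(\xi,\sqrt l)_{\frak L}=1$ exactly when $\xi$ is a norm from $\Theta_{\frak L_1}$, and similarly for $-\sqrt l$ and $\Theta_{\frak L_2}$. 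First I would record the structure of $E_{\frak L}$: since $l\equiv -1\bmod 8$, $-1$ is not a square in $\Q_l$ while $\sqrt l$ generates the unique unramified quadratic extension's ramified companion; more precisely $E_{\frak L}/\Q_l$ is biquadratic, $\Q_l(\sqrt{-1})/\Q_l$ is the unramified quadratic subextension, and $\Q_l(\sqrt l)/\Q_l$, $\Q_l(\sqrt{-l})/\Q_l$ are the two ramified ones. The field $F_{\frak l}=\Q_l(\sqrt{-2l})=\Q_l(\sqrt{-l})$ (as $-2$ is a square in $\Q_l$ when $l\equiv-1\bmod 8$, since then $2$ is a square in $\Q_l$), so $E_{\frak L}/F_{\frak l}$ is the unramified quadratic extension $F_{\frak l}(\sqrt{-1})$.

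Next I would relate the norm condition $N_{E_{\frak L}/F_{\frak l}}(\xi)=\pm1$ to the valuation and residue data of $\xi$. Since $E_{\frak L}/F_{\frak l}$ is unramified, $N_{E_{\frak L}/F_{\frak l}}$ maps a uniformizer to a uniformizer and units to units; writing $\xi=\pi^{m}u$ with $u$ a unit and $\pi$ a uniformizer of $E_{\frak L}$, the hypothesis $N(\xi)=\pm1\in F_{\frak l}^\times$ forces $m=0$, i.e. $\xi$ is a unit, and pins down the norm of $\bar\xi$ (the residue class) in the residue field. The residue field of $F_{\frak l}$ is $\F_l$ and of $E_{\frak L}$ is $\F_{l^2}$; the norm $N_{E_{\frak L}/F_{\frak l}}$ on units reduces to $N_{\F_{l^2}/\F_l}$ on residues, which is $x\mapsto x^{l+1}$. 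So $N_{E_{\frak L}/F_{\frak l}}(\xi)=1$ means $\bar\xi^{\,l+1}=1$ in $\F_l^\times$, i.e. $\bar\xi$ lies in the subgroup of $(l+1)$-st roots of unity (the "norm-one" torus), and $N(\xi)=-1$ means $\bar\xi^{\,l+1}=-1$.

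Then I would compute the two Hilbert symbols directly. For a unit $\xi$ and the element $\sqrt l$ of valuation $1$ (a uniformizer of $E_{\frak L}$, since $\sqrt l$ generates a ramified extension of $\Q_l$ and $E_{\frak L}/\Q_l(\sqrt l)$ is unramified), the tame symbol formula gives $(\xi,\sqrt l)_{\frak L}=\overline{\xi}^{\,(l^2-1)/2}$ evaluated in $\{\pm1\}\subset\F_{l^2}^\times$ — that is, the symbol is $+1$ iff $\bar\xi$ is a square in $\F_{l^2}^\times$. Now a $(l+1)$-st root of unity in $\F_{l^2}^\times$ is automatically a square there (the squares form the index-two subgroup, which contains the cyclic subgroup of order $l+1$ precisely because $(l+1)\mid (l^2-1)/2$, using $l\equiv-1\bmod 4$ so that $2\mid (l-1)/2$... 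I would verify this parity point carefully), whence $N(\xi)=1\Rightarrow(\xi,\sqrt l)_{\frak L}=1$. Conversely if $N(\xi)=-1$, then $\bar\xi^{\,l+1}=-1$, so $\bar\xi$ has even order not dividing $(l^2-1)/2$, forcing $\bar\xi$ to be a nonsquare and $(\xi,\sqrt l)_{\frak L}=-1$. The identical computation with the uniformizer $-\sqrt l$ (also of valuation $1$, differing from $\sqrt l$ by the unit $-1$, which is a square in $\F_{l^2}^\times$ since $l\equiv-1\bmod 4$) gives $(\xi,-\sqrt l)_{\frak L}=(\xi,\sqrt l)_{\frak L}$, completing the proof. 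The main obstacle I anticipate is bookkeeping the $2$-adic congruence conditions coming from $l\equiv-1\bmod 8$ correctly — in particular verifying that $\sqrt l$ and $-\sqrt l$ really are uniformizers of $E_{\frak L}$ and that the cyclic group of order $l+1$ sits inside the squares of $\F_{l^2}^\times$ — but these are all elementary once the congruence $l\equiv-1\bmod 8$ is used to control squareness of $2$, $-1$, and $-2$ in $\Q_l$ and in the residue fields.
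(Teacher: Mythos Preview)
Your tame-symbol approach is correct and genuinely different from the paper's. The paper instead exploits multiplicativity and the projection (norm-restriction) formula: writing $\pm\sqrt{l}=\sqrt{-2l}\cdot(\pm\sqrt{-2})^{-1}$ up to a square in $E_{\frak L}$, it factors
\[
(\xi,\pm\sqrt{l})_{\frak L}=(\xi,\sqrt{-2l})_{\frak L}\cdot(\xi,\pm\sqrt{-2})_{\frak L}.
\]
The second factor is $1$ because both entries are units at an odd prime. Since $\sqrt{-2l}\in F_{\frak l}$, the first factor equals $(N_{E_{\frak L}/F_{\frak l}}(\xi),\sqrt{-2l})_{\frak l}$ by projection, which then descends to $(\pm 1,2l)_l$ over $\Q_l$ and is read off directly. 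That route feeds the norm hypothesis straight into the projection formula, avoiding any residue-field computation; your approach is more hands-on but equally valid, and arguably more transparent about where the congruence on $l$ enters.

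Two small corrections to your bookkeeping. First, $-2$ is \emph{not} a square in $\Q_l$ (since $-1$ is not, as $l\equiv 3\bmod 4$); what you actually need, and what holds, is that $2$ is a square, giving $\Q_l(\sqrt{-2l})=\Q_l(\sqrt{-l})$. Second, for $l\equiv -1\bmod 8$ one has $(l-1)/2$ \emph{odd}, not even. This does not affect the norm-one case---the divisibility $(l+1)\mid (l^2-1)/2=(l+1)\cdot\tfrac{l-1}{2}$ holds for every odd $l$---but it is precisely what makes the norm-$(-1)$ case work: from $\bar\xi^{\,l+1}=-1$ you get $\bar\xi^{\,(l^2-1)/2}=(-1)^{(l-1)/2}=-1$, so $\bar\xi$ is a nonsquare in $\F_{l^2}^\times$. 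Replacing your phrase ``even order not dividing $(l^2-1)/2$'' with this one-line computation would make the argument airtight.
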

\begin{proof} If $N_{E_l/F_l}(\xi)=1$, then $\xi$ is a unit of $E_l$.
Therefore
$$(\xi,\pm \sqrt{l})_{l} = (\xi, \sqrt{-2l})_l \cdot (\xi, \pm \sqrt{-2})_l =
(N_{E_l/F_l}(\xi),\sqrt{-2l})_l =1 $$ by 63:11a in
\cite{OM} and (1.5.3) Proposition and (7.1.4) Corollary in
\cite{NSW}.

If $N_{E_l/F_l}(\xi)=-1$, then
$$(\xi,\pm \sqrt{l})_{l} = (\xi, \sqrt{-2l})_l \cdot (\xi, \pm \sqrt{-2})_l =
(N_{E_l/F_l}(\xi),\sqrt{-2l})_l =(-1,2l)_l=-1 $$
by 63:11a in \cite{OM} and (1.5.3) Proposition and (7.1.4) Corollary
in \cite{NSW}.
\end{proof}

Let $B=o_F + o_F \sqrt{-1}$ be the order of $E$ and
$K_B$ be the ring class field of $E$ defined by $B$.

\begin{prop}\label{extra} Let $\bold X$ be the scheme
defined by $x^2+y^2=n$ for some non-zero integer $n$ over $o_F$. Then $\bold X(o_F)\neq \emptyset$ if and only if there
is
$$\prod_{w\in \Omega_F}(x_w,y_w)\in \prod_{w\in \Omega_F} \bold X(
o_{F_w})$$ such that  $$\psi_{K_B/E}(\tilde f_E[\prod_{w\in
\Omega_F}(x_w,y_w)])=1 \ \ \ \text{and} \ \ \ \psi_{\Theta/E}(\tilde
f_E[\prod_{w\in \Omega_F}(x_w,y_w)])=1 $$ where $K_B$ and $\Theta$
are defined as above, both $\psi_{K_B/E}$ and $\psi_{\Theta/E}$ are
the Artin maps and
$$\tilde f_E[(x_w,y_w)]= \begin{cases} (x_w+y_w \sqrt{-1},
x_w-y_w\sqrt{-1}) \ \ \
& \text{if $w$ splits in $E/F$} \\
x_w+y_w\sqrt{-1} \ \ \ & \text{otherwise.}
\end{cases} $$
\end{prop}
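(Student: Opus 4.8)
The plan is to realize the scheme $\bold X : x^2+y^2 = n$ over $\frak o_F$ as an integral model of a homogeneous space of a group of multiplicative type, and then apply the machinery of Section~1, specifically Corollary~\ref{ef}, with the explicit abelian extensions being $K_L/E$ and $\Theta/E$. First I would observe that $x^2+y^2=n$ is, over $F$, a torsor under the norm-one torus $G = R^1_{E/F}(\Bbb G_m)$ (where $E = F(\sqrt{-1})$): writing $z = x + y\sqrt{-1} \in E$, the equation becomes $N_{E/F}(z) = n$, so $X_F$ is a trivial $G$-torsor (it has the rational point coming from any fixed solution of $a^2+b^2=n$ over $F$, which exists because we assume (\ref{loc})). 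Here $G$ fits into the resolution (\ref{resolution}) with $n=1$: namely $0 \to G \to R_{E/F}(\Bbb G_m) \xrightarrow{N_{E/F}} \Bbb G_m \to 0$, and one may even take $\mu = $ the canonical section with $l=2$. So $\prod_1^n \Bbb I_E = \Bbb I_E$, and the map $\tilde f_E$ in the statement is precisely the map $f = f_1$ of Section~1, read off from (\ref{evaluation}): $\lambda$ sends $z \in G$ to $z \in E^\times$, and after adelization $(x_w,y_w) \mapsto x_w + y_w\sqrt{-1}$ at inert/ramified $w$, and to the pair $(x_w+y_w\sqrt{-1}, x_w-y_w\sqrt{-1})$ at split $w$ (where $E\otimes_F F_w \cong F_w\times F_w$).

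Next I would invoke Corollary~\ref{ef}: once we exhibit an $\bold X$-admissible subgroup $\Xi$ of $\Bbb I_E$ together with open $\Xi_1 \subseteq \Xi$ and its associated finite abelian extension $K_{\Xi_1}/E$, the non-emptiness of $\bold X(\frak o_F)$ is equivalent to the existence of a point $(x_w,y_w) \in \prod_w \bold X(\frak o_{F_w})$ with $\psi_{K_{\Xi_1}/E}(\tilde f_E(\cdot)) = 1$. The substance of the Proposition is therefore the identification of the correct field: I claim that the relevant $\Xi$ has associated extension equal to the compositum $K_L \cdot \Theta$ over $E$, so that the single condition $\psi_{K_{\Xi_1}/E} = 1$ splits into the two displayed conditions $\psi_{K_L/E} = 1$ and $\psi_{\Theta/E} = 1$. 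To pin this down I would compute $\Xi = \lambda[\mathrm{Stab}_{\Bbb A}(\bold X)] \cdot \mu([k]T(\Bbb A_F))$-type data from Theorem~\ref{exsitence}: at $w \nmid \infty$ not dividing $2n$, $\mathrm{Stab}(\bold X(\frak o_{F_w})) = \bold G(\frak o_{F_w})$ and $\lambda$ of it is the norm-one units of $\frak o_{E_w}$, which is exactly the local condition cut out by $K_L/E$ (the ring class field of the order $L = \frak o_F + \frak o_F\sqrt{-1}$) away from its conductor; the extra ramification at $\frak l$ (the prime over $l$) is governed, via Lemma~\ref{computation}, by the quartic extension $\Theta = E(\sqrt[4]{l})$, which by Lemma~\ref{unr} is unramified outside $\frak L$.

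The main obstacle, and the step I would spend the most care on, is the precise local analysis at the bad primes — above $2$ and above $l$ — to show that the admissible $\Xi$ produced abstractly by Theorem~\ref{exsitence} can be taken so that $\Bbb I_E/(E^\times \Xi_1) \cong \Gal(K_L\Theta/E)$. Concretely: (i) at primes above $2$ one must check that $\mathrm{Stab}(\bold X(\frak o_{F_w}))$ — which depends on the chosen integral model $x^2+y^2 = n$, not merely on $\bold G(\frak o_{F_w})$ — has image under $\lambda$ equal to the group of local norms matching the conductor of $K_L$ at $2$ (this is where the order $L$ rather than the maximal order $\frak o_E$ enters, since $2$ may divide the conductor); and (ii) at $\frak l$ one must verify that the local norm-one group, after multiplying by $\mu([k]T(F_\frak l))$, corresponds under local class field theory to the decomposition group of $\frak L$ in $\Theta/E$, which is precisely the content of Lemma~\ref{computation}: the Hilbert-symbol computation there says that $(\xi, \pm\sqrt{l})_\frak L$ detects whether $N_{E_\frak L/F_\frak l}(\xi) = \pm 1$, i.e. the local Artin symbol in $\Gal(\Theta_{\frak L_i}/E_\frak L)$ is trivial iff $\xi$ is a local norm. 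Granting these local matchings, $K_{\Xi_1} = K_L\Theta$ and the Galois group decomposes as $\Gal(K_L/E) \times \Gal(\Theta/E)$ modulo their intersection (which one checks is trivial, again via Lemma~\ref{unr} since $\Theta/E$ is ramified at $\frak L$ whereas $K_L/E$ can be arranged unramified there), so $\psi_{K_{\Xi_1}/E}(\tilde f_E(x)) = 1$ is equivalent to the conjunction of the two displayed equalities, completing the proof by Corollary~\ref{ef}.
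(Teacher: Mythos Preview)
Your setup is correct: $X_F$ is a trivial torsor under $G=R^1_{E/F}(\Bbb G_m)$, the resolution (\ref{resolution}) has $n=1$ with $T=\Bbb G_m$ via the norm, and $\tilde f_E$ is precisely the map $f=f_1$ of \S\ref{sec.construction}. The endpoint, Corollary~\ref{ef}, is also the right one.

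The gap is in how you propose to produce the admissible subgroup. You want to run the abstract construction of Theorem~\ref{exsitence}, with its constants $c,\,t,\,h(T,S_0),\,r$ and auxiliary set $S_1$, and then argue that the resulting $\Xi$ satisfies $K_{\Xi_1}=K_L\Theta$. But Theorem~\ref{exsitence} is only an existence statement; the $\Xi$ it manufactures is not canonical and will in general correspond to a field strictly larger than $K_L\Theta$ (the constants are chosen crudely to force injectivity, not to be sharp). Nor can you pass from that abstract $\Xi$ to the one for $K_L\Theta$ by inclusion: enlarging $\Xi$ shrinks the target quotient and may destroy injectivity. So the identification step you flag as ``the main obstacle'' is not merely laborious, it is the wrong thing to attempt.

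The paper bypasses Theorem~\ref{exsitence} entirely and verifies Definition~\ref{adm} directly for the candidate. One writes down
\[
\lambda_E:\ G(\Bbb A_F)\big/G(F)\,SO_{\Bbb A}(L)\ \longrightarrow\ \bigl[\Bbb I_E/E^\times N_{\Theta/E}(\Bbb I_\Theta)\bigr]\times\bigl[\Bbb I_E/E^\times\textstyle\prod_w L_w^\times\bigr],
\]
where $SO_{\Bbb A}(L)=\{\sigma\in G(\Bbb A_F):\sigma L=L\}$ plays the role of $\mathrm{Stab}_{\Bbb A}(\bold X)$, and checks injectivity by hand. If $u\in\ker\lambda_E$, the second factor gives $\lambda_E(u)=\alpha\cdot i$ with $\alpha\in E^\times$ and $i\in\prod_w L_w^\times$; taking norms, $N_{E/F}(\alpha)=N_{E/F}(i)^{-1}\in F^\times\cap\prod_w\frak o_{F_w}^\times=\{\pm 1\}$. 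The whole point of $\Theta$ is then to exclude $N_{E/F}(\alpha)=-1$: by Lemma~\ref{unr} the extension $\Theta/E$ is unramified away from $\frak L$, so $\psi_{\Theta/E}(i)=\psi_{\Theta/E}(i_{\frak l})$, and Lemma~\ref{computation} says this symbol is $-1$ exactly when $N_{E_{\frak L}/F_{\frak l}}(i_{\frak l})=-1$, contradicting $u\in\ker\lambda_E$ via the first factor. Hence $N_{E/F}(\alpha)=1$, so $\alpha\in G(F)$ and $i\in SO_{\Bbb A}(L)$, and Corollary~\ref{ef} finishes. Your local discussion at $\frak l$ gestures at Lemma~\ref{computation} but misses this global role of $\Theta$ as a sign detector for $N_{E/F}$; and the delicate local matching at $2$ you anticipate is unnecessary, because the second factor is \emph{defined} through the order $L$ rather than extracted from a computation of $\mathrm{Stab}$.
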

\begin{proof} Define $E_w=E\otimes_F F_w$ for any $w\in \Omega_F$.
Let $B_w$ be the completion of $B$ inside $E_w$ for $w<\infty_F$ and
$B_w=E_w$ for $w\in \infty_F$. Let
$$SO_\A(B)=\{ \sigma\in R_{E/F}^1(\G_m)(\A_F): \ \sigma B= B\}. $$

By Lemma \ref{unr} and Lemma \ref{computation}, the natural group
homomorphism
$$\lambda_E :  R_{E/F}^1(\G_m)(\A_F)/R_{E/F}^1(\G_m)(F)SO_\A(B)\rightarrow [\Bbb I_E/(E^\times  N_{\Theta/E}(\Bbb
I_{\Theta}))] \times [\Bbb I_E/(E^\times \prod_{w\in \Omega_F}
B_w^\times)]$$ is well-defined.

If $u\in ker \lambda_E$, there are $$\alpha\in E^\times \ \ \ \text{
and } \ \ \ i\in \prod_{w\in \Omega_F} B_w^\times$$ with
$\lambda_E(u)=\alpha i$. Therefore
$$N_{E/F}(\alpha)=N_{E/F}(i)^{-1} \in F\cap
(\prod_{w\in \Omega_F}o_{F_w}^\times )=\{\pm 1\} . $$

Suppose $N_{E/F}(\alpha)=N_{E/F}(i)=-1$. Write $$i=(i_w)_{w}\in
\prod_{w\in \Omega_F} L_w^\times .$$ Since $\Theta/E$ is unramified
over all primes of $E$ except $l$ by Lemma \ref{unr}, one has
$\psi_{\Theta/E}(i_w)$ is trivial for all primes $w\neq l$,
where $i_w$ is regarded as an idele whose $w$-component is $i_w$ and
1 otherwise. Since $$N_{E/F}(i_l)=N_{E_l/F_l}(i_l)=-1 , $$ one gets
$$\psi_{\Theta/E}(\alpha i)=\psi_{\Theta/E}(i)=\psi_{\Theta/E}(i_l)=-1 \in \mu_4 $$
by Lemma \ref{computation}, where $\mu_4$ is the set of 4-th roots
of unity and $Gal(\Theta/E)\cong \mu_4$. This contradicts to $u \in
ker \lambda_E$.

Therefore $$N_{E/F}(\alpha)=N_{E/F}(i)=1 .$$ This implies that
$$\alpha\in R_{E/F}^1(\G_m)(F) \ \ \ \text{ and }  \ \ \  i\in SO_\A(B). $$ One concludes
that
 $\lambda_E$ is injective. The
result follows from Corollary \ref{ef}.
\end{proof}

Finally we will give an explicit example by Proposition \ref{extra}
such that the local-global principle is not true.

Let
$$N_{F/\Q}(\delta)=2^{s_1}7^{s_2}p_1^{e_1}\cdots p_g^{e_g} \ \ \ \text{ and
 } \ \ \ D(\delta)=\{p_1, \cdots, p_g \}$$
where $\delta=a+b\sqrt{-14}\ (a,b \in \Z)$ is an integer of
$F=\Q(\sqrt{-14})$. Denote $a=7^{s_3}a_1$ with $a_1\in \Z, 7\nmid a_1$ and
$$\aligned
& D_1=\{p\in D(\delta): (\frac{-1}{p})=(\frac{14}{p})=1 \text{ and } (\frac{7}{p})=-1\
\}\cr & D_2= \{p\in D(\delta): (\frac{-1}{p})=-(\frac{14}{p})=1
\text{ and } (\frac{7}{p})=-1\ \} \cr & D_3= \{p\in D(\delta):
(\frac{-1}{p})=(\frac{14}{p})=1\text{ and } (\frac{7}{p})_4=-1 \}.
\endaligned $$
It is clear that $e_i$ is even for $p_i\in D_2$.

\begin{exa} \label{pell} Let $F=\Q(\sqrt{-14})$ and $\delta\in o_F$ as above.
Then $\delta$ can be written as a sum of two squares over $o_F$ if and only if

(1) $\delta$ can be written a sum of two squares over $o_{F_w}$ at every place $w$ of $F$.

(2) $D_1\neq \emptyset$; or
$$(\frac{a_1}{7})=(-1)^{s_1+s_2/2+\sum_{p_i\in D_2}\frac{e_i}{2}+\sum_{p_i\in
D_3}e_i}$$ for $D_1=\emptyset$.
\end{exa}

\begin{proof} Let $H$ be the Hilbert class field of $F$. By some computations, we know $K_B=H_E=H.E$ (with degree 4), where $H_E$ is the Hilbert class field of $E$. For any $\prod_{w\in \Omega_F}(x_w,y_w)\in \prod_{w\in \Omega_F}\bold X(o_{F_w})$, we have
$$\aligned &\psi_{K_B/E}(\tilde f_E[\prod_{w\in \Omega_F}(x_w,y_w)])=\psi_{H.E/E}(\tilde f_E[\prod_{w\in
\Omega_F}(x_w,y_w)])\\
&=\psi_{H/F}(\prod_{w\in\Omega_F} N_{E/F}(\tilde f_E[(x_w,y_w)]))=\psi_{H/F}(\prod_{w\in\Omega_F}\delta)=1,
\endaligned$$
the last equation holds by global class field theory. Therefore we only need to consider the condition in Proposition \ref{extra} which comes from $\Theta=E(\sqrt[4]{7})$ . Let $\Gal(\Theta/E)\cong \Z/4\Z=\{\pm 1,\pm {\bf i}\}$, where ${\bf i}$ is the primitive 4-th root of unity.

Suppose $p=2$. Since $\Theta/E$ is unramified, we have $$\psi_{\Theta/E}(\tilde f_E[(x_w,y_w)])=(-1)^{s_1}.$$

Suppose $p=7$. By the local solvable condition at place $7$, we have $s_2$ is even. Let $\xi\in \Z_7[\sqrt{-1}]^\times$ such that $N_{E_\frak l/F_w}(\xi)=\delta/(-14)^{s_2/2}$, where $\frak l$ (resp. $w$) is the unique place of $E$ (resp. $F$) over $7$. By Lemma \ref{computation}, we have $$\psi_{\Theta/E}(\tilde f_E[(x_w,y_w)])=(\sqrt{(-14)^{s_2/2}},7)_{4,\frak l}\cdot(\xi,7)_{{4,\frak l}}=1\cdot (-1)^{s_2/2}(\frac{a_1}{7}).$$

Suppose $(p,14\cdot N_{F/\Q}(\delta))=1$. Since $\Theta/E$ is unramified, we have $$\psi_{\Theta/E}(\tilde f_E[\prod_{w\in \Omega_F}(x_w,y_w)])=1.$$

Let $F'=\Q(\sqrt{-1})$ and $\Theta'=F'(\sqrt[4]{7})$. Note that $\Theta'/F'$ is abelian. By local class field theory, we have
$$\psi_{\Theta/E}(\tilde f_E[\prod_{w\mid p}(x_w,y_w)])=\psi_{\Theta'/F'}(N_{E/F'}(\prod_{w\mid p} \tilde f_E[(x_w,y_w)])).$$

Suppose $(\frac{-1}{p})=-1$. Then $7=u^2(-1)^t$, where $u\in \Z_p^\times$ and $t\in \Z_{\geq 0}$. Let $\frak P$ be the unique place of $F'$ over $p$. Therefore
$$\psi_{\Theta'/F',\frak P}(p)=(u\sqrt{(-1)^t}, p)_{\frak P}=(u^2,p)_p=1.$$

Now we assume that $p$ is a prime which splits into $\frak P_1$ and $\frak P_2$ over $F'$.

If $(\frac{7}{p})=-1$, then $\psi_{\Theta'/F',\frak P_i}(p)=\pm {\bf i}$ for $i=1 \text{ or } 2$. By global class field theory, $$\psi_{\Theta'/F',\frak P_1}(p)\cdot\psi_{\Theta'/F',\frak P_2}(p)=\psi_{\Theta'/F',v_1}(p)\cdot\psi_{\Theta'/F',v_2}(p)=1\cdot 1=1,$$
where $v_1$ and  $v_2$ are the unique place of $F'$ over $2$ and $7$ respectively.
Hence $$\psi_{\Theta'/F',\frak P_1}(p)=-\psi_{\Theta'/F',\frak P_2}(p)=\pm {\bf i}.$$

If $(\frac{7}{p})=1$ and $x^4 \equiv 7 \mod p$ is solvable,
then both $\frak P_1$ and $\frak P_2$ split in $\Theta/E$. Hence $\psi_{\Theta'/F',\frak P_i}(p)=1$ for $i=1 \text{ or }2$.

If $(\frac{7}{p})=1$ and $x^4 \equiv 7 \mod p$ is not
solvable, then both $\frak P_1$ and $\frak P_2$ are inert in
$\Theta/E$. Then $\psi_{\Theta'/F',\frak P_1}(p)=\psi_{\Theta'/F',\frak P_2}(p)=-1$.

Denote
$$\aligned &D_1'=\{p\in D(\delta):(\frac{-1}{p})=(\frac{14}{p})=1 \text{ and } (\frac{7}{p})_4=1\}\\
& D_2'=\{p\in D(\delta):(\frac{-1}{p})=-(\frac{14}{p})=1 \text{ and } (\frac{7}{p})=1\}.\endaligned$$
Note that $e_i$ is even for $p_i\in D_2\cup D_2'$ by the local condition at place $p_i$.
For any $(x_w,y_w)\in \bold X(o_{F_w})$, one has
$$\psi_{\Theta'/F'}(N_{E/F'}(\prod_{w\mid p_i}\tilde f_E[(x_w,y_w)]))= \begin{cases} 1 \ \ \ & \text{if
$p_i\in D_1'\cup D_2'$} \cr
 (-1)^{a_1}(-i_{\frak P_1})^{e_i}  \ \ \ & \text{if
$p_i\in D_1$} \cr (-1)^{\frac{1}{2}e_i} \ \ \ & \text{if
$p_i\in D_2$} \cr (-1)^{e_i} \ \ \ & \text{if $p_i\in D_3$}
\end{cases} $$ where $a_i=ord_{\frak P_i}(N_{E/F'}(x_{w}+y_{w}\sqrt{-1}))$,
$\frak P_i$ is a prime of $E$ above $p_i\in D_1$ and $i_{\frak P_i}=\psi_{\Theta'/F',\frak P_i}(p_i)=\pm {\bf i}$.

Suppose $D_1 =\emptyset$. By Proposition \ref{extra}, $\bold X(o_{F})\neq \emptyset$
if and only if
$$(\frac{a_1}{7})=(-1)^{s_1+s_2/2+\sum_{p_i\in D_2}\frac{e_i}{2}+\sum_{p_i\in
D_3}e_i}.$$

Now we suppose $D_1 \neq \emptyset$. Let $n'=p_1^{e_1}\cdots p_g^{e_g}$. We have $$1=(14,N_{F/\Q}(\delta))_7=(-1)^{s_2}\cdot (7,n')_7=(-1)^{s_2}(7,n')_2\prod_{i=1}^g(\frac{7}{p_i})^{e_i},$$ the last equation holds by the quadratic reciprocity law. By the local solvability at place $2$, $$1=(-1,N_{F/\Q}(\delta))_2=(-1)^{s_2}\cdot (-1,n')_2=(-1)^{s_2}\cdot (7,n')_2.$$
Therefore $\prod_{i=1}^g(\frac{7}{p_i})^{e_i}=1.$
If $(\frac{-1}{p_i})=-1$ or $(\frac{14}{p_i})=-1$ or $(\frac{-14}{p_i})=-1$, then $e_i$ is even by local conditions. Therefore we have
$$\sum_{p_i\in D_1}{e_i}\equiv 0 \mod 2.$$
Therefore the product of the image of $\psi_{\Theta'/F'}$ over $p\in D_1$ is $\{\pm 1\}$ when $D_1 \neq \emptyset$. Hence there exists $\prod_{w\in \Omega_F}(x_w,y_w)\in \prod_{w\in \Omega_F}\bold X(o_{F_w})$ such that $$\psi_{\Theta/E}(\prod_{w\in \Omega_F}\tilde f_E[(x_w,y_w)])=1.$$
The proof is complete.
\end{proof}


\bf{Acknowledgment} \it{ The authors would like to thank
Colliot-Th\'el\`ene for helpful comments on the early version of the
paper. The work is supported by the Morningside Center of
Mathematics. The first author is supported by NSFC, grant \#
10671104 and the second author is supported by NSFC, grant \#
10325105 and \# 10531060. }

\begin{bibdiv}

\begin{biblist}

\bib {BR95} {article} {
   author={M.Borovoi},
   author={Z.Rudnick},
   title={Hardy-Littlewood varieties and semisimple groups},
   journal={Invent. Math.},
   volume={119},
   date={1995},
   number={},
   Pages={37 \ndash 66},
}

\bib{CTX} {article} {
    author={J-L.Colliot-Th\'el\`ene},
    author={F. Xu},
    title={Brauer-Manin obstruction for integral points of homogeneous spaces and
         representations by integral quadratic forms},
    journal={Compositio Math.},
    volume={145}
    date={2009},
    Pages={309\ndash 363},
}

\bib{Dem} {article} {
    author={C. Demarche},
 title={Th\'eor\`emes de dualit\'e pour les complexes de tores et approximation
 forte dans les groupes r\'eductifs},
  journal={preprint},
    volume={},
    number={}
      date={},
    pages={},
 }

\bib{Ha08} {article} {
    author={D. Harari},
 title={Le d\'{e}faut d'approximation forte pour les groupes alg\'{e}briques commutatifs},
  journal={Algebra and Number Theory},
    volume={2},
      date={2008},
    pages={595\ndash 611},
    number={5}
 }

\bib{HS05} {article} {
    author={D. Harari},
    author={T. Szamuely},
 title={Arithmetic duality theorems for 1-motives},
  journal={J. Reine Angew. Math. (Crelle)},
    volume={578 },
      date={2005},
    pages={93\ndash 128},
    number={}
 }

\bib{HS08} {article} {
    author={D. Harari},
    author={T. Szamuely},
 title={Local-global principles for 1-motives},
  journal={Duke Math. J. },
    volume={143},
    number={3}
      date={2008},
    pages={531-557},
 }

\bib{Milne80}{book}{
    author={ J. S. Milne},
     title={Etale cohomology},
       volume={ },
     publisher={Princeton Press},
     place={},
      date={1980},
   journal={ },
    series={},
    volume={},
    number={ },
}

\bib{Milne86}{book}{
    author={ J. S. Milne},
     title={Arithmetic duality theorems},
       volume={ },
     publisher={Academic Press},
     place={},
      date={1986},
   journal={ },
    series={Perspectives in Mathematics},
    volume={1},
    number={ },
}

\bib{Na53}{article}{author={T.Nagell},
title={On the representations
of integers as the sum of two integral squares in algebraic, mainly
quadratic fields},
journal={Nova Acta Soc.Sci.Upsal.},
volum={15},
number={11},
date={1953},
pages={77pp}, }

\bib{Na61}{article}{author={T.Nagell},
title={On the sum of two integral squares in certain quadratic
fields},
journal={Ark. Mat.},
volum={4}, number={}, date={1961},
pages={267-286}, }

\bib{Ni}{article}{
    author={I. Niven},
 title={Integers of quadratic fields as sums of squares},
  journal={Trans.Amer.Math.Soc.},
    volume={48},
    number={3},
      date={1940},
    pages={405-417},
 }

\bib{NSW}{book}{
    author={ J.Neukirch},
    author={A.Schmidt},
    author={K.Wingberg},
    title={Cohomology of Number Fields},
    volume={323},
    publisher={Springer},
    series={Grundlehren},
    date={2000},
}

\bib{OM}{book}{author={O.T.O'Meara},
title={Introduction to Quadratic Forms}, publisher={Springer}
place={}, date={1973}, volume={}, number={}, }

\bib{PR94}{book}{
    author={V. P. Platonov},
    author={A. S. Rapinchuk},
     title={Algebraic groups and  number theory},
     publisher={Academic Press},
     place={},
      date={1994},
    volume={ },
    number={ },
}

\bib {Sko}{book}{
    author={A. N. Skorobogatov},
     title={Torsors and rational points},
     publisher={Cambridge University Press},
     place={},
      journal={ },
            series={Cambridge Tracts in Mathematics},
    volume={144},
    date={2001},
    number={ },
     pages={},
}

\bib{W1}{article}{
    author={D.Wei},
    title={On the sum of two squares in quadratic fields $\Bbb
    Q(\sqrt{\pm p})$}
    journal={Acta Arith.},
    volume={147},
    number={3}
      date={2011},
    pages={253-260},
 }

\bib{W2}{article}{
    author={D.Wei},
    title={On the sum of two integral squares in certain imaginary quadratic fields}
    journal={preprint},
    volume={},
    number={}
      date={},
    pages={},
 }

\bib{WX} {article} {
    author={D. Wei},
    author={F. Xu},
 title={Integral points for multi-norm tori},
  journal={Proc. London Math. Soc.},
    volume={104},
    number={3}
      date={2012},
    pages={1019-1044},
 }

\end{biblist}
\end{bibdiv}

\end{document}